\newtheorem{thm}{Theorem}
\newtheorem{corr}{Corollary}
\newtheorem{lem}{Lemma}
\newtheorem{rem}{Remark}
\newtheorem{df}{Definition}
\newcommand{\RM}{\mathbb{R}}
\newcommand{\CM}{\mathbb{C}}
\newcommand{\MM}{\,\mbox{\bf M}}
\newcommand{\WM}{\,\mbox{\bf W}}
\newcommand{\HM}{\,\mbox{\bf H}}
\newcommand{\cn}{\operatorname{cn}}
\newcommand{\dn}{\operatorname{dn}}
\title{The Transverse Instability of Periodic Waves in Zakharov-Kuznetsov Type Equations}
\author{Mathew A. Johnson\footnote{
The author would like to thank Jared C. Bronski for suggesting this problem and for his many
useful conversations during the preparation and revision of this manuscript.
Also, the author gratefully acknowledges
support from a National Science Foundation Postdoctoral Fellowship under
grant DMS-0902192.}}
\theoremstyle{notation}
\begin{document}
\bibliographystyle{plain}

\maketitle
\begin{center}
Department of Mathematics\\
Indiana University\\
831 East Third Street\\
Bloomington, IN 47405 USA\\
matjohn@indiana.edu
\end{center}

\begin{center}
{\bf Keywords:} Transverse Instabilities, Periodic Waves, Generalized Korteweg-de Vries Equation,
          Generalized Zakharov-Kuznetsov equation, Generalized Benjamin-Bona-Mahony Equation,
          Generalized gBBM-Zakharov-Kuznetsov equation.
\end{center}

\begin{abstract}
In this paper, we investigate the instability of one-dimensionally stable
periodic traveling wave solutions of the
generalized Korteweg-de Vries equation to long wavelength transverse perturbations
in the generalized Zakharov-Kuznetsov equation
in two space dimensions.
By deriving appropriate asymptotic expansions of the periodic Evans function, we
derive an index which yields sufficient conditions for transverse instabilities
to occur.  This index is geometric in nature, and applies to any periodic traveling wave profile under
some minor smoothness assumptions on the nonlinearity.  We also describe the analogous theory for
periodic traveling waves of the generalized Benjamin-Bona-Mahony equation to long wavelength
transverse perturbations in the gBBM-Zakharov-Kuznetsov equation.
\end{abstract}


\section{Introduction}
It is well known that the Korteweg-de Vries (KdV) equation
\begin{equation*}
u_t=u_{xxx}+u u_x, \;\;(x,t)\in\RM^2
\end{equation*}
arises as an model for one-dimensional long wavelength surface waves propagating in weakly nonlinear dispersive media,
as well as the evolution of weakly nonlinear ion acoustic waves in plasmas [23].
There are several weakly two-dimensional variations on the KdV equation.  Of particular importance is
the
Zakharov-Kuznetsov (ZK) equation [26]
\[
u_t=\left(u_{xx}+u_{yy}\right)_x+uu_x,
\]
which arises when in the study of geophysical fluid dynamics in anisotropic settings and ionic-acoustic waves
in magnetized plasmas. 

In many applications, it is useful to introduce a parameter $\alpha$ in the nonlinearity
as follows:
\[
u_t=u_{xxx}+\alpha u u_x.
\]
Notice that by scaling one can always assume that $\alpha$ has a fixed sign.
In the context of plasmas, there are situations in which the physical parameters in the problem (such as temperature
and density) force $\alpha$ to vanish.  In neighborhood of such parameter values, it is necessary
to consider higher terms in the nonlinearity and one usually resorts to using a modified KdV (mKdV) equation
\[
u_t=u_{xxx}+\beta u^2u_x
\]
to describe such ion acoustic waves.  Here, the sign of $\beta$ can not be forced to be definite by
scaling considerations and the two signs of $\beta$ correspond to different physical phenomenon.  For
example, the focusing mKdV equation (corresponding to $\beta>0$) can be derived as a model for the evolution
of ion acoustic perturbations with a negative ion component, while the defocusing mKdV (corresponding to $\beta<0$)
can be derived as a model for the evolution of ion acoustic perturbations in a plasma with two
negative ion components of different temperature.  Moreover, the structure of the exponentially decaying solutions
are radically different: for $\beta>0$ the solitary
waves are homoclinic to a constant state, while for $\beta<0$ the solitary waves correspond to shock like solutions
which are heteroclinic to opposite constant states.  As in the case of the KdV equation, the modified ZK (mZK) equation
\[
u_t=\left(u_{xx}+u_{yy}\right)_x+\beta u^2 u_x
\]
occur naturally as weakly two-dimensional variations of the mKdV.

In order to encompass as many physical
applications as possible, the focus of our study will be on 
the generalized KdV (gKdV) equation
\begin{equation}
u_t=u_{xxx}+f(u)_x\label{gkdv}
\end{equation}
along with the generalized ZK (gZK) equation
\begin{equation}
u_t=\left(u_{xx}+u_{yy}\right)_x+f(u)_x\label{gZK}
\end{equation}
For suitable nonlinearities (satisfying mild convexity and smoothness assumptions), the gKdV equation admits
asymptotically constant solutions known as the solitary waves.  It seems natural to question the stability of such 
solutions of the gKdV as one-dimensional solutions in the gZK equations.  It is clear
the no sense of stability can occur unless the solitary wave is itself stable as a solution
of the corresponding $y$-independent equations.  
It is well known (see [5], [7], and [20]) that such planar solitary waves are one-dimensionally (orbitally)
stable to localized perturbations as solutions of the gKdV  if
\[
\frac{\partial}{\partial c} \int_\RM u^2(x+ct;c)dx
\]
is positive, and are (exponentially) unstable to such perturbations if it is negative: here $u$ is the profile of the
solitary wave and $c$ is the wave speed.  Physically, this says that such a solution is stable precisely
when its momentum is an increasing function of the wave speed.  In the case of power-law nonlinearities
$f(u)=u^{p+1}$, $p\geq 1$, this is equivalent with stability if $1\leq p<4$ and instability if $p>4$.

The transverse instability of one-dimensionally stable solutions of the gKdV in the gZK equation has been well
studied in the solitary wave context.  In the case of the ZK equation, Bridges [9] derived a geometric
condition for the long wavelength transverse instability of solitary solutions of \eqref{gkdv} by using the multi-symplectic
structure of the ZK.  There, the author derived an index which could be viewed as the Jacobian of
a particular map and whose sign determined the transverse stability of the underlying wave.  Moreover,
Bridges was able to evaluate this Jacobian explicitly for power-law nonlinearities $f(u)=u^{p+1}/(p+1)$
by using the fact that all solitary wave solutions in this case can be expressed in terms of hyperbolic functions.

If one rather considers the periodic solutions of \eqref{gkdv}, then the stability theory
is much less understood.  This reflects the fact that the spectrum of the corresponding
linearized operators is purely continuous, and hence it seems 
more difficult for nonlinear periodic waves of the gKdV to be stable than their solitary wave
counterparts.  Moreover, the periodic waves of the gKdV in general have a much more
rich structure than the solitary waves: even in the case of power-law nonlinearities it is not possible to
write down a general elementary representative for all periodic solutions of the gKdV, which stands in contrast to the solitary wave theory.
It should be noted, however, that the periodic solutions are perhaps
more physically relevant as they are used to model wave trains: such patterns are prevalent
in a variety of applications.  Up to this point, there seems to be two types of stability
results in the periodic case: nonlinear stability to perturbations with periods which are integer multiples of the underlying
waves period (see [1], [2], [8], [12], and [19]), and spectral stability to localized perturbations
(see [10], [14], and [16]).  In general, the methods employed in the nonlinear stability analysis parallels that
of the energy functional framework from the solitary wave theory [15].  However, such 
methods do not extend in a straight forward manner if one considers localized perturbations.  Instead, the techniques in this case mainly center
around studying the spectrum of the corresponding linearized operators and applying
regular perturbation theory: see for example [16].
Recently [10], there has been progress with analyzing the linearized spectrum in a neighborhood of the
origin in the spectral plane by using periodic Evans function methods introduced by Gardner [13].
Such unstable spectrum corresponds to instabilities to slow modulations of the underlying wave, i.e.
to long wavelength perturbations.  The authors in [10] essentially conduct a rigorous
modulation theory calculation to derive asymptotic expansions of the Evans function near the origin.

In this paper, we will extend the methods in [10] in order to analyze the spectral stability
of $y$-independent solutions of the gZK to long wavelength transverse perturbations.
In particular, we will conduct a rigorous modulation theory calculation.  To this end
we will first show the corresponding linearized spectral problem has a periodic
eigenvalue at the origin of multiplicity greater than one.  Using the integrability of the ODE defining the traveling wave profiles
of \eqref{gZK} and analyzing the corresponding periodic Evans functions,
we are able to analytically determine the leading order behavior of the periodic eigenvalues
bifurcating from the origin in the wavenumber of the transverse perturbation.  This information yields sufficient
conditions for the spectrum of the perturbed spectral problems to have unstable spectrum in a neighborhood of the origin.
These conditions are geometric in nature and can be expressed as the Jacobian of a particular map from the traveling wave parameters to the
period and mass of the underlying wave.  The positivity of this index implies modulational instability of the underlying periodic
wave to long wavelength transverse perturbations.  By using the results of [10], [11], and [17] we are able to evaluate
the sign of this Jacobian in several cases involving power-law nonlinearities: in particular, we are able to make
conclusive instability statements for the ZK and focusing mZK equations.

Also of interest in this paper is the analogous periodic transverse instability theory for the generalized
Benjamin-Bona-Mahony (gBBM) equation
\begin{equation}
u_t-u_{xxt}+u_x+f(u)_x=0.\label{gbbm}
\end{equation}
When $f(u)=u^2$, equation \eqref{gbbm} is precisely the Benjamin-Bona-Mahony equation, also known as the regularized long wave equation,
which arises as an alternate model to the gKdV equation as a description of gravity water waves in the long wave regime (see [6] and [21]).
As with the gKdV equation, a natural weakly two-dimensional generalization of \eqref{gbbm} is the gBBM-ZK equation [25]
\begin{equation}
u_t-\left(u_{xt}+u_{yy}\right)_x+u_x+f(u)_x=0.\label{gbbmzk}
\end{equation}
Recently, the stability of periodic traveling waves of the gBBM equation to both periodic
and localized perturbations has been conducted [18] using the Evans function techniques of [10].  In particular, it was
found that the stability theory closely resembles that of the gKdV equation.  As to be expected then, if one considers
the transverse instability of such a solution of \eqref{gbbm} to long wavelength transverse perturbations in the
gBBM-ZK equation the resulting theory parallels that of the gKdV equation described above.  
Therefore, we will only outline the corresponding transverse instability theory in this case.

The outline of this paper is as follows.  In section 2 we recall from [10] and [17]
the basic properties of the periodic traveling waves solutions
of \eqref{gkdv}.  In particular, we will discuss a parametrization of such solutions which is useful in our calculations.
We then carry out our transverse instability analysis for the gKdV equation in section 3.
In section 4, we outline the analogous theory for the gBBM equation and end with some
discussion and closing remarks in section 5.

\section{Properties of the Periodic Traveling Waves of gKdV}

In this section, we recall the basic properties of the periodic traveling wave solutions of \eqref{gkdv}.
A traveling wave solution of the gKdV with positive wave speed $c>0$
is a solution of the traveling wave ordinary differential equation
\begin{equation}
u_{xxx}+f(u)_x-cu_x=0,\label{travelode}
\end{equation}
i.e. they are solutions of \eqref{gkdv} which are stationary in the moving coordinate frame
defined by $x+ct$.  Clearly, such solutions are reducible to quadrature and satisfy
\begin{align}
u_{xx}+f(u)-cu&=a,\label{quad1}\\
\frac{1}{2}u_x^2+F(u)-\frac{c}{2}u^2-a u&=E,\label{quad2}
\end{align}
where $a$ and $E$ are real constants of integration, and $F$ satisfies $F'=f$, $F(0)=0$.
In order to ensure the existence of periodic orbits of \eqref{travelode}, we must require that the effective
potential
\[
V(u;a,c)=F(u)-\frac{c}{2}u^2-au
\]
has a non-degenerate local minimum.  Notice this places a restriction on the admissible class of nonlinearities
as well as the allowable parameter regime for a given nonlinearity.  This motivates the following definition.

\begin{df}
We define $\Omega\subset\RM^3$ to be the open set consisting of all triples $(a,E,c)$
such that \eqref{travelode} has at least one periodic solution.
\end{df}

When $(a,E,c)\in\Omega$, we use the notation $u(x;a,E,c)$ to denote a particular periodic solution\footnote{In the case
where more than one such solution exists for a particular $(a,E,c)$, we can distinguish them by their
initial values.}.  
Given an $(a,E,c)\in\Omega$, we will always assume that the roots $u_{\pm}$ of $E=V(u;a,c)$
which correspond to the absolute max and min, respectively, of our periodic wave
are simple.  It follows that $u_{\pm}$ are $C^1$ functions of $a$, $E$, and $c$ on $\Omega$ and,
with out loss of generality, $u(0)=u_{-}$.

\begin{rem}
Taking into account the translation invariance of \eqref{gkdv}, it follows that for each
$(a,E,c)\in\Omega$ we can construct a one-parameter family of periodic traveling wave
solutions of \eqref{gkdv}: namely
\[
u_{\xi}(x,t)=u(x+ct+\xi;a,E,c)
\]
where $\xi\in\RM$.  Thus, the periodic traveling waves of \eqref{gkdv} constitute a four
dimensional manifold of solutions.  However, this added constant of integration does not play
an important role in our theory and can be modded out.
\end{rem}

Given an $(a,E,c)\in\Omega$, notice that the period of the corresponding solution
can be represented as
\begin{equation}
T=T(a,E,c):= 2\int_{ u_{-}}^{u_{+}} \frac{du}{\sqrt{2\left(E-V(u;a,c)\right)}}.\label{period}
\end{equation}
The above integrand can be regularized at the square root branch
points $ u_-,\; u_{+}$ by a standard procedure (see [10] or [17]).
In particular, we can differentiate the above relation with respect to the parameters $a$, $E$, and $c$ within
the parameter regime $\Omega$.  Similarly the mass, momentum, and Hamiltonian of the traveling wave
are given by the first and second moments of this density, i.e.
\begin{align}
M(a,E,c) & = \int_0^T u(x)\; dx = 2\int_{u_-}^{u_+} \frac{u\; du}{\sqrt{2\left(E-V(u;a,c)\right)}}\label{mass} \\
P(a,E,c) & = \int_0^T u^2(x)\; dx = 2\int_{u_-}^{u_+} \frac{u^2\; du}{\sqrt{2\left(E-V(u;a,c)\right)}}\label{momentum} \\
H(a,E,c) &=  \int_0^T\left( \frac{u_x^2}{2} - F(u)\right)(x)~dx =  2\int_{u_-}^{u_+} \frac{E-V(u;a,c) - F(u)}{\sqrt{2\left(E-V(u;a,c)\right)}}\;du\label{energy} .
\end{align}
These integrals can be regularized as above, and represent conserved
quantities of the gKdV flow restricted to the manifold of periodic traveling
wave solutions. In particular one can differentiate the above expressions with respect to the parameters $(a,E,c)$.

\begin{rem}
Notice that in the derivation of the gKdV [3], the solution $u$ can represent either the horizontal velocity of a wave profile,
or the density of the wave.  Thus, the functional $M$ can properly be interpreted as a ``mass" since it is the integral of
the density over space.  Similarly, the functional $P$ can be interpreted as a ``momentum" since it is the integral
of the density times velocity over space.
\end{rem}

Throughout this paper, a large role will be played by the gradients of the above
conserved quantities.  Thankfully, the underlying classical dynamics of the \eqref{travelode}
provides many useful relations between these gradients.  To see this, note that
the classical action (in the sense of action angle variables) is given by
\begin{equation}
K(a,E,c) = \oint u_x \;du = \int_0^T u_x^2 \;dx = 2\int_{u_-}^{u_+}\sqrt{2(E-V(u;a,c))}\;du. \label{action}
\end{equation}
The derivative of this map as a function $K:\Omega\to\RM$ is given explicitly by
\[
\nabla_{a,E,c}K(a,E,c)=\left(M(a,E,c),T(a,E,c),\frac{1}{2}P(a,E,c)\right),
\]
where $\nabla_{a,E,c}=\left<\frac{\partial}{\partial a},\frac{\partial}{\partial E},\frac{\partial}{\partial c}\right>$.
This is easily verified by differentiating \eqref{action} and recalling that $E-V(u_{\pm};a,c)=0$.
Using the fact that $T,\;M,\;P,$ and $H$ are $C^1$ functions of parameters $(a,E,c)$ on
the domain $\Omega$, we see that
\begin{equation}
E \;\nabla_{a,E,c}( T) + a\; \nabla_{a,E,c} (M) + \frac{c}{2} \;\nabla_{a,E,c}( P) + \nabla_{a,E,c}( H) = 0:\label{overdetermined}
\end{equation}
see [10] for details.
Thus, although the subsequent theory is developed most naturally
in terms of the quantities $T$, $M$, and $P$, it is possible to restate our results
in terms of $M$, $P$, and $H$ so long as $E\neq 0$: this is desirable since these have a natural interpretation as conserved
quantities of the partial differential equation \eqref{gkdv}.

We now discuss the aforementioned  parametrization of the periodic solutions of \eqref{travelode} in more
detail.  A major technical necessity throughout this paper, especially in the analysis
of the gZK equation, is that the constants
of motion for the PDE flow defined by \eqref{gkdv} provide (at least locally) a good
parametrization for the periodic traveling wave solutions.  In particular, we assume
for a given $(a,E,c)\in\Omega$ that the conserved quantities
$(H,M,P)$ are good local coordinates for the periodic traveling waves near $(a,E,c)$ in the sense
that the map $(a,E,c)\to(H(a,E,c),M(a,E,c),P(a,E,c))$ has a unique
$C^1$ inverse in a neighborhood of $(a,E,c)$.  Adopting the notation
\[
\{f,g,h\}_{x,y,z} := \frac{\partial(f,g,h)}{\partial(x,y,z)}
\]
for $3 \times 3$ Jacobians, 
we see that this is possible exactly when $\{H,M,P\}_{a,E,c}$ is non-zero, which is equivalent to
$\{T,M,P\}_{a,E,c}$ being non-zero if $E\neq 0$ by \eqref{overdetermined}.
In particular, it was shown in [10] that 
the vanishing of $\{T,M,P\}_{a,E,c}$ forces a change in the structure the generalized periodic
null-space of the linearized operator: see Lemma \ref{perkdv:lem} below for more details.
This non-degeneracy condition has been shown to
generically hold in several cases of power-law nonlinearities in [11].
In the case of the gZK equation, the sign of this Jacobian will play an important role in determining of the
corresponding periodic traveling wave solution of the gKdV is a one-dimensionally stable solution
of the gZK equation.  When considering the transverse instability of such a solution, another
important Jacobian which arises is $\{T,M\}_{a,E}$, where this 2x2 Jacobian is defined
analogous to the above 3x3 case.  Notice that the non-vanishing of this Jacobian implies
the period and mass can be used to parameterize nearby periodic waves with a fixed wave speed.
It is precisely the sign of this Jacobian that determines the stability of such a solution to long wavelength
transverse instabilities in both the gZK and gKP equations.

It should be noted that the geometric quantities $\{T,M\}_{a,E}$ and $\{T,M,P\}_{a,E,c}$ also arise
naturally when considering the stability of a periodic traveling wave solution of the gKdV equation
perturbations whose period is an integer multiple of that of the underlying wave (see [10], [11], and [17]).
Since, as mentioned in the introduction, the long wavelength transverse instabilities
occur from the bifurcation of periodic eigenvalues from the origin
into the unstable space, it is not surprising that these quantities play a large role in the
transverse instability analysis.


\section{Transverse Stability Analysis}

In this section, we begin our transverse stability analysis for one-dimensionally stable
periodic traveling wave solutions of \eqref{gkdv}.
As mentioned before, a $T$-periodic traveling wave
solution $u(x,t)=u(x+ct)$ of \eqref{gkdv} corresponds to a $y$-independent $T$-periodic traveling wave solution solution of traveling
gZK equation
\begin{equation}
u_t=\left(u_{xx}+u_{yy}\right)_x+f(u)_x-cu_x.\label{travelgzk}
\end{equation}
We now consider a small localized perturbation of $u$ of the form
\[
\psi(x,y,t)=u(x)+\varepsilon v(x,y,t)+\mathcal{O}(\varepsilon^2)
\]
where $v(x,\cdot,t)\in L_{\rm}^\infty(\RM)$ for each $(x,t)\in\RM^2$ and $v(\cdot,y,t)\in L^2(\RM)$ for each $(y,t)\in\RM^2$,
and $|\varepsilon|\ll 1$ is a small parameter.  Forcing $\psi$ to solve \eqref{travelgzk} yields a hierarchy of
consistency conditions: the $\mathcal{O}(\varepsilon^0)$ equation holds since $u$ is a solution of \eqref{travelgzk}, and
the $\mathcal{O}(\varepsilon)$ equation reads
\begin{equation*}
-v_t=\partial_x\left(\mathcal{L}[u]-\partial_y^2\right)v
\end{equation*}
where $\mathcal{L}[u]=-\partial_x^2-f(u)+c$.
As this linearized equation is autonomous in time and in the spatial variable $y$, we may seek separated solutions
of the form\footnote{Alternatively, you could take the Laplace transform in time and the Fourier transform in the spatial variable $y$.}
\[
v(x,y,t)=e^{-\mu t+iky}v(x)
\]
where $\mu\in\CM$, and $k\in\RM$ is the transverse wavenumber of the perturbation.  This leads to the (ODE) spectral problem
\begin{equation}
\partial_x\left(\mathcal{L}[u]+k^2\right)v=\mu v\label{lineargzk}
\end{equation}
considered on the real Hilbert space $L^2(\RM)$.  Notice that we consider $\mathcal{L}[u]$ as a closed, self-adjoint
second order differential operator with periodic coefficients with dense domain in $L^2(\RM)$.

Our general strategy in analyzing \eqref{lineargzk} is as follows.
As the spectral problem involves differential operators with periodic coefficients,
standard results in Floquet theory imply that the $L^2$ spectrum is purely continuous, and consists
entirely of $L^\infty$ eigenvalues\footnote{Notice this shows one of the main difficulties in considering the
spectral stability of periodic solutions as opposed to the asymptotically constant case.  In the latter,
the continuous spectrum can in general be characterized via a Weyl sequence argument and the Fourier transform and shown to not contribute
to an instability.  Thus, one must only study the $L^2$ point spectrum of the linear operators.  In the periodic
context, however, the spectrum is entirely essential and hence any spectral instability comes from the continuous spectrum.}.
We define the monodromy matrix $\MM(\mu,k)$ to be the solution of the corresponding first order system
\begin{equation}
Y_x=\HM(x,\mu,k)Y,\;\;Y(0)={\bf I},\label{system}
\end{equation}
where ${\bf I}$ is the three by three identity matrix.  Following [13] we can then characterize the $L^\infty$ eigenvalues
as the roots of the periodic Evans function, which we define as
\[
D(\mu,k,\lambda):=\det\left(\MM(\mu,k)-\lambda{\bf I}\right).
\]
The complex constant $\lambda$ referred to as the Floquet multiplier and is related to the class of admissible
perturbations considered: for example, $\lambda=1$ corresponds to periodic perturbations with the same
period as the underlying wave.  It can be shown that, for a fixed transverse wavenumber $k$, $\mu$ is in the
spectrum of the corresponding linearized operator if and only if there exists a $\lambda\in\CM$ with $|\lambda|=1$
such that $D(\mu,k,\lambda)=0$, i.e. if and only if the corresponding monodromy operator has an eigenvalue on the unit
circle.  As a consequence, the $L^2$ spectrum of the corresponding spectral problems, for a fixed $k$,
can be parameterized by by the discrete zero-set of the functions $D(\mu,k,\lambda)$ with $|\lambda|=1$, i.e.
it is equal to the set
\[
\bigcup_{\kappa\in[-\pi,\pi)}\{\mu\in\CM:D(\mu,k,e^{i\kappa})=0\}.
\]
Moreover, it will be shown that the natural symmetries present in the gKP equations
forces $D(0,0,1)=0$, and hence the origin in the spectral plane is a periodic eigenvalue
for each of the spectral problems \eqref{lineargzk}.  Our goal
is to understand the nature of the spectrum in a neighborhood of $\mu=0$ when the transverse
wavenumber $k$ is very small,
and hence we must study the zero set of the function $D(\mu,k,1)$ in a neighborhood of
the point $(\mu,k,\lambda)=(0,0,1)$.  This will allow us to determine how these
periodic eigenvalues bifurcate from the origin in the transverse wavenumber and will lead us to a sufficient
condition for instability.




We begin our analysis by studying transverse instability within the gZK equation.
Throughout, we will work with a fixed periodic traveling wave solution $u(x;a,E,c)$ of \eqref{gkdv} with $(a,E,c)\in\Omega$
and consider the transverse instability of $u$ as a solution of \eqref{travelgzk}.  By linearizing \eqref{travelgzk}
around $u$ and applying the appropriate transforms in time and the transverse direction $y$ leads to the spectral
problem
\begin{equation*}
\partial_x\left(\mathcal{L}[u]+k^2\right)v=\mu v
\end{equation*}
considered on $L^2(\RM)$.  We begin by determining when a periodic traveling wave solution of the gKdV equation
is a one-dimensionally stable solution of the gZK equation, i.e. determining necessary
conditions which imply the $L^2(\RM)$ spectrum of the unperturbed linearized operator $\partial_x\mathcal{L}[u]$
is confined to the imaginary axis\footnote{It follows that by one-dimensionally stable, we mean
that it is a $y$-independent solution which is stable to $y$-independent perturbations.}.

First, we recall the necessary results of [10] where the spectrum
of the operator $\partial_x\mathcal{L}[u]$ was extensively analyzed in a neighborhood
of $\mu=0$.  The periodic traveling wave profile $u(\cdot;a,E,c)$
satisfies the traveling wave ODE \eqref{travelode}, and moreover satisfies the identities
\eqref{quad1} and \eqref{quad2}.  In particular, it follows from differentiating \eqref{quad1} that
the functions $u_x,$ $u_a$, and $u_E$ are linearly independent and satisfy 
\begin{align}
\mathcal{L}[u]u_x=0,\;\;\mathcal{L}[u]u_a=1,\;\;\mathcal{L}[u]u_E=0.\label{nullL:zk}
\end{align}
It follows that we can use these three functions to form a basis to construct the monodromy matrix $\MM(0,0)$ in this case.
In particular, it can be shown that the matrix $\MM(0,0)$ has $1$ as an eigenvalue of algebraic multiplicity
three and geometric multiplicity two.  Using this along with the Hamiltonian structure of the unperturbed
linear operator $\partial_x\mathcal{L}[u]$ and a bit of perturbation theory, we are able to prove
the following lemma.

\begin{lem}\label{perkdv:lem}
The function $D(\mu,k,1)$ satisfies the following properties.
\begin{enumerate}
\item $D(\mu,k,1)$ is an odd function of $\mu$ and an even function of $k$.
\item The limit $\lim_{\mu\to\infty}{\rm sign}\left(D(\mu,0,1)\right)$ exists and is negative.
\item The asymptotic relation
\[
D(\mu,0,1)=-\frac{1}{2}\{T,M,P\}_{a,E,c}\;\mu^3+\mathcal{O}(|\mu|^4)
\]
holds in a neighborhood of $\mu=0$.
\end{enumerate}
In particular, the corresponding periodic traveling wave solution of \eqref{gkdv} is
one-dimensionally unstable as a solution of \eqref{gZK} if $\{T,M,P\}_{a,E,c}<0$.
\end{lem}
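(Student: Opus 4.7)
The plan is to establish the three structural claims about $D(\mu,k,1)$ and then conclude instability by an intermediate value argument on the real $\mu$-axis.

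Claims (1) and (2) are relatively direct. Evenness in $k$ is immediate, since the linearized operator $\mathcal{L}[u]+k^{2}$ depends on $k$ only through $k^{2}$; hence $\HM(x,\mu,k)=\HM(x,\mu,-k)$, so $\MM(\mu,k)=\MM(\mu,-k)$ and $D(\mu,k,1)$ is even in $k$. For oddness in $\mu$, I would exploit the Hamiltonian factorization $\partial_{x}(\mathcal{L}[u]+k^{2})=JH$ with $J=\partial_{x}$ skew and $H=\mathcal{L}[u]+k^{2}$ self-adjoint, together with the Abel identity $\det\MM(\mu,k)=1$ coming from the fact that the coefficient matrix $\HM$ has zero trace. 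Passing to the formally adjoint first-order system identifies the $-\mu$ monodromy with a conjugate of $\MM(\mu,k)^{-T}$, and then the algebraic identity $\det(A^{-T}-\Ib)=-\det(A-\Ib)/\det A$ yields $D(-\mu,k,1)=-D(\mu,k,1)$. For the large-$\mu$ sign, WKB analysis of the third-order ODE at $k=0$ produces three Floquet exponents $\nu_{i}(\mu)$ with $\nu_{i}^{3}\sim -\mu$ and $\sum_{i}\nu_{i}=0$. Combining $\det(\MM-\Ib)=\prod_{i}(e^{\nu_{i}T}-1)$ with $\sum_{i}\nu_{i}=0$ collapses the triple product to $2\sum_{i}\sinh(\nu_{i}T)$, and for $\mu>0$ large the real cube root $\nu_{0}\sim -\mu^{1/3}$ dominates, giving leading behavior $-e^{\mu^{1/3}T}$ and confirming claim (2).

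Claim (3) is the main technical step and the principal obstacle. The three functions $\{u_{x},u_{a},u_{E}\}$ form a basis for $\ker\partial_{x}\mathcal{L}[u]$ by the identities in the excerpt; only $u_{x}$ is periodic, while differentiating $u(x+T;a,E,c)=u(x;a,E,c)$ with respect to $a$ and $E$ yields the periodicity gaps $u_{a}(x+T)-u_{a}(x)=-T_{a}u_{x}(x)$ and $u_{E}(x+T)-u_{E}(x)=-T_{E}u_{x}(x)$. In this basis $\MM(0,0)$ is upper triangular, confirming the Jordan structure stated just before the lemma. For $\mu\neq 0$ small, I would expand solutions of $\partial_{x}\mathcal{L}[u]v=\mu v$ perturbatively as $v^{(j)}(x;\mu)=v_{0}^{(j)}(x)+\mu v_{1}^{(j)}(x)+\mu^{2}v_{2}^{(j)}(x)+\cdots$ with $v_{0}^{(j)}\in\{u_{x},u_{a},u_{E}\}$ and iterates determined by $\partial_{x}\mathcal{L}[u]v_{n+1}^{(j)}=v_{n}^{(j)}$. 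The periodicity gaps of the iterates reduce to the integrals $\int_{0}^{T}u^{p}\,dx$ for $p=1,2$, namely $M$ and $P$, together with $T$ itself, and the action-integral identity $\nabla_{a,E,c}K=(M,T,P/2)$ from Section 2 translates these into partial derivatives of $(T,M,P)$ with respect to $(a,E,c)$. After organising these integrals, the coefficient of $\mu^{3}$ in $\det(\MM(\mu,0)-\Ib)$ collapses to the $3\times 3$ Jacobian $\{T,M,P\}_{a,E,c}$, yielding the stated coefficient $-\tfrac{1}{2}\{T,M,P\}_{a,E,c}$; the oddness from claim (1) automatically eliminates any $\mu^{0}$ and $\mu^{2}$ contributions. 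The hardest part of the entire proof is precisely this bookkeeping step: matching each integral produced by the iterative solvability argument with the correct variational derivative so that the final determinant assembles into the stated Jacobian.

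To deduce instability when $\{T,M,P\}_{a,E,c}<0$, claim (3) gives $D(\mu,0,1)>0$ for small $\mu>0$, while claim (2) gives $D(\mu,0,1)<0$ for $\mu>0$ sufficiently large. Since $D(\,\cdot\,,0,1)$ is entire in $\mu$, the intermediate value theorem produces a real $\mu_{\ast}>0$ with $D(\mu_{\ast},0,1)=0$, i.e.\ a positive real $L^{\infty}$ eigenvalue of $\partial_{x}\mathcal{L}[u]$, which lies in the $L^{2}(\RM)$ spectrum and thus witnesses the exponential one-dimensional instability of the underlying periodic traveling wave as a solution of the gZK equation.
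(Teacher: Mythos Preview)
Your proposal is correct and follows essentially the same route as the paper: the paper's own proof defers parts (2) and (3) to [10], where the argument is precisely the perturbation expansion of the monodromy in the basis $\{u_x,u_a,u_E\}$ combined with large-$\mu$ asymptotics, and its proof of part (1) invokes the same Hamiltonian symmetry $\MM(\mu,0)\sim\MM(-\mu,0)^{-1}$ and $k\mapsto -k$ invariance that you use. Your WKB computation for the large-$\mu$ sign and the intermediate value argument for instability are exactly the mechanisms underlying the cited results.
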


\begin{proof}
The second and third parts have been proven in [10] using perturbation theory and asymptotic
analysis.  The fact that $D(\mu,k,1)$ is odd in $\mu$ was also proved in [10] using the fact
that the Hamiltonian structure of the operator $\partial_x\mathcal{L}[u]$
implies that $\MM(\mu,0)\sim\MM(-\mu,0)^{-1}$.  Similarly, $D(\mu,k,1)$ is even in $k$
since the perturbed spectral problem $\partial_x\left(\mathcal{L}[u]+k^2\right)v=\mu v$ is
invariant under the transformation $k\mapsto -k$.  
\end{proof}

We are now interested in the case where the transverse wavenumber $k$ is small.  By Lemma \ref{perkdv:lem},
$\mu=0$ is a periodic eigenvalue of the linear operator $\partial_x\mathcal{L}[u]$
of multiplicity three.  We now wish to see how these periodic eigenvalues
bifurcate from the origin in the transverse wavenumber $k$.  To do so, we determine the dominant
balance of the equation $D(\mu,k,1)=0$ for $\mu$ and $k$ small.  This is the content of the following lemma.

\begin{lem}\label{ZKEvans:lem}
The equation $D(\mu,k,1)=0$ has the following normal form in a neighborhood of $(\mu,k)=(0,0)$:
\[
-\frac{\mu^3}{2}\{T,M,P\}_{a,E,c}+\mu k^2\{T,M\}_{a,E}\int_0^Tu_x^2dx+\mathcal{O}(|\mu|^4+k^4)=0.
\]
\end{lem}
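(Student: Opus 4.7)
By Lemma \ref{perkdv:lem}, $D(\mu,k,1)$ is odd in $\mu$ and even in $k$, and at $k=0$ it satisfies $D(\mu,0,1)=-\tfrac{1}{2}\{T,M,P\}_{a,E,c}\mu^3+\mathcal{O}(|\mu|^4)$. Together these two facts pin the Taylor expansion of $D$ about $(\mu,k)=(0,0)$ down to the form
\[
D(\mu,k,1)=-\tfrac{\mu^3}{2}\{T,M,P\}_{a,E,c}+\gamma\,\mu k^2+\mathcal{O}(|\mu|^4+k^4)
\]
for a single undetermined coefficient $\gamma$, since oddness in $\mu$ eliminates all $\mu^{2j}k^{2\ell}$ contributions and the quoted $k=0$ expansion eliminates the pure-$\mu^1$ term. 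My task thus reduces to identifying $\gamma=\{T,M\}_{a,E}\int_0^T u_x^2\,dx$.

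To compute $\gamma$ I plan to extend the regular perturbation argument of [10], which produced the $\mu^3$ coefficient at $k=0$, by treating $k^2$ as a second small parameter alongside $\mu$. Build three solutions $\phi_1,\phi_2,\phi_3$ of the ODE
\[
\partial_x(\mathcal{L}[u]+k^2)\phi=\mu\phi
\]
that reduce at $(\mu,k)=(0,0)$ to $u_x,\,u_a,\,u_E$, respectively, and expand each $\phi_j$ as a formal power series in $\mu$ and $k^2$. The first-order corrections satisfy
\[
\partial_x\mathcal{L}[u]\phi_j^{(1,0)}=\phi_j^{(0,0)},\qquad
\partial_x\mathcal{L}[u]\phi_j^{(0,1)}=-\partial_x\phi_j^{(0,0)},
\]
and are produced by one integration followed by inversion of $\mathcal{L}[u]$ against the non-periodic solutions $u_a,\,u_c$ obtained by differentiating the quadratures \eqref{quad1}--\eqref{quad2} in the parameters (for instance, $\mathcal{L}[u]u_c=-u+\text{const}$).

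The Evans function is then proportional to the $3\times 3$ determinant of the monodromy deficits $\Delta_j(\mu,k):=\Phi_j(T;\mu,k)-\Phi_j(0;\mu,k)$, where $\Phi_j=(\phi_j,\phi_{j,x},\phi_{j,xx})^T$. At leading order, $\Delta_1^{(0,0)}=0$ (translation invariance of $u_x$) while $\Delta_2^{(0,0)}=-T_a\,U_x(0)$ and $\Delta_3^{(0,0)}=-T_E\,U_x(0)$ are both collinear with the phase vector $U_x(0)$ of $u_x$. Multilinear expansion of the determinant in $(\mu,k^2)$ then shows that only the products of one $\Delta_j^{(1,0)}$, one $\Delta_{j'}^{(0,1)}$, and one $\Delta_{j''}^{(0,0)}$ with $j''\in\{2,3\}$ can survive as a $\mu k^2$ contribution, since every other combination has either a zero column or two collinear columns. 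The surviving handful of $3\times 3$ determinants can be evaluated in closed form by differentiating the formulas \eqref{period}--\eqref{momentum} for $T,\,M,\,P$ with respect to $(a,E,c)$ and invoking the gradient identity $\nabla_{a,E,c}K=(M,T,P/2)$ for the classical action $K=\int_0^T u_x^2\,dx$.

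The principal obstacle I expect is the final algebraic collapse: the $\mu k^2$ coefficient first appears as a bilinear expression in the partial derivatives $T_a,T_E,M_a,M_E,P_a,P_E$ together with various period integrals, and one must verify that the debris cancels to leave precisely $(T_aM_E-T_EM_a)\int_0^T u_x^2\,dx$. Two observations should make this tractable: the free integration constants introduced when solving the first-order corrections have to drop out of the Wronskian-like determinant (a useful consistency check that eliminates many candidate terms a priori); and the appearance of the classical action $K$ is natural, since $K$ is the generating function whose $(a,E)$-gradient equals $(M,T)$, so the factor $\int_0^T u_x^2\,dx$ should pair cleanly with the $2\times 2$ Jacobian $\{T,M\}_{a,E}$.
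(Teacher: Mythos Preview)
Your plan is correct and is essentially the paper's own approach: same basis $u_x,u_a,u_E$, same monodromy--deficit determinant $\delta\WM$, same two--parameter perturbation in $(\mu,k^2)$. The paper short--circuits the ``algebraic collapse'' you flag as the main obstacle by observing that $\partial_\mu Y_1(T)\big|_{\mu=0}$ is collinear with the nonzero columns of $\delta\WM(0,0)$ (all lie in the direction $(0,V'(u_-),0)^T$), so among the $k^2$--variations only $\partial_{k^2}Y_1(T)\big|_{k=0}$ is actually needed; a couple of elementary row and column operations then reduce the $3\times 3$ determinant directly to $\{T,M\}_{a,E}\int_0^T u_x^2\,dx$ with no debris to cancel.
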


\begin{proof}
By standard arguments using a Newton diagram (see [4] or [24]), Lemma \ref{perkdv:lem} implies
the dominant balance of the equation $D(\mu,k,1)=0$ near $(\mu,k)=(0,0)$ must take the form
\[
-\frac{\mu^3}{2}\{T,M,P\}_{a,E,c}+ \frac{\mu k^2}{2}D_{\mu kk}(0,0,1)+\mathcal{O}(|\mu|^4+k^4)=0.
\]
Thus, it is enough to prove $D_{\mu kk}(0,0,1)=2\{T,M\}_{a,E}\int_0^Tu_x^2dx$.

To begin, 
we write \eqref{lineargzk} 
as a first order system of form \eqref{system} with
\[
\HM(x,\mu,k)=\left(
               \begin{array}{ccc}
                 0 & 1 & 0 \\
                 0 & 0 & 1 \\
                 -\mu-f''(u)u_x & c+k^2-f'(u) & 0 \\
               \end{array}
             \right).
\]
From \eqref{nullL:zk}, we can use the functions $u_x$, $u_a$, and $u_E$
to construct a corresponding matrix solution in the case $(\mu,k)=(0,0)$.
To this end, we define $\WM(x,\mu,k)$ to be the matrix solution which satisfies
\[
\WM(x,0,0)=\left(
             \begin{array}{ccc}
               u_x & u_a & u_{E} \\
               u_{xx} & u_{ax} & u_{Ex} \\
               u_{xxx} & u_{axx} & u_{Exx} \\
             \end{array}
           \right)
\]
and fix the initial condition $\WM(0,\mu,k)=\WM(0,0,0)$ for all $(\mu,k)\in\CM\times \RM$.
Defining 
$\delta\WM(\mu,k)=\WM(x,\mu,k)\big{|}_0^T$, it follows that
\[
D(\mu,k,1)=-\det\left(\delta\WM(\mu,k)\right)
\]
since $\det\left(\WM(0,0,0)\right)=-1$.  Thus, it suffices to analyze the matrix $\delta\WM(\mu,k)$.

Next, we compute $\delta\WM(0,0)$ with the plan of treating $\delta\WM(\mu,k)$ as a small
perturbation of this constant matrix.  Notice that by modding out the translation invariance, we can set
\begin{align*}
u(0)&=u_{-}=u(T)\\
u_x(0)&=0=u_{x}(T)\\
u_{xx}(0)&=-V'(u_{-})=u_{xx}(T).
\end{align*}
Using these equations along with the chain rule and \eqref{gkdv}, a straightforward calculation shows that
\[
\delta\WM(0,0)=\left(
                 \begin{array}{ccc}
                   0 & 0 & 0 \\
                   0 & V'(u_{-})T_a & V'(u_{-})T_E \\
                   0 & 0 & 0 \\
                 \end{array}
               \right).
\]
In order to calculate $D_{\mu kk}(0,0,1)$, we now treat $\WM(x,\mu,k)$ as a small perturbation
of $\WM(x,0,0)$ for $|(\mu,k)|_{\CM\times\RM}\ll 1$.  To this end, we notice that
\begin{equation}
\delta\WM(\mu,k)=\delta\WM(\mu,0)+k^2\WM_0(\mu,k)\label{eqn1}
\end{equation}
for some matrix $\WM_0(\mu,k)$ which is $\mathcal{O}(1)$ in $\mu$ and $k^2$.  Thus,
we must compute $\delta\WM(\mu,0)$ to first order and $\WM_0$ to leading order in $k$.
The computation of $\delta\WM(\mu,0)$ was carried out in detail in [10].  In particular,
it was proved that if we define the vector solutions corresponding to the functions
$u_x$, $u_a$, and $u_E$ at $(\mu,k)=(0,0)$ as $Y_1$, $Y_2$, and $Y_3$, respectively, then
\begin{align*}
\frac{\partial}{\partial\mu}Y_{1}(T,\mu,0)\big{|}_{\mu=0}&=
            \left(
              \begin{array}{c}
                0 \\
                V'(u_{-})\left(-T_c+u_{-}T_a-\frac{u_{-}^2}{2}T_E\right) \\
                0 \\
              \end{array}
            \right),\\
\frac{\partial}{\partial\mu}Y_{2}(T,\mu,0)\big{|}_{\mu=0}&=\left(
                                           \begin{array}{c}
                                             -\frac{d u_{-}}{da}\int_0^Tu_adx+\frac{du_{-}}{dE}\int_0^Tuu_adx \\
                                             * \\
                                             -\left(1+(c-f'(u_{-}))\frac{d u_{-}}{da}\right)\int_0^Tu_adx
                                                +\left(c-f'(u_{-})\right)\frac{du_{-}}{dE}\int_0^Tuu_adx \\
                                           \end{array}
                                         \right),
\end{align*}
and
\[
\frac{\partial}{\partial\mu}Y_{3}(T,\mu,0)\big{|}_{\mu=0}=\left(
                                           \begin{array}{c}
                                             -\frac{d u_{-}}{da}\int_0^Tu_Edx+\frac{du_{-}}{dE}\int_0^Tuu_Edx \\
                                             * \\
                                             -\left(1+(c-f'(u_{-}))\frac{d u_{-}}{da}\right)\int_0^Tu_Edx
                                                +\left(c-f'(u_{-})\right)\frac{du_{-}}{dE}\int_0^Tuu_Edx \\
                                           \end{array}
                                         \right),
\]
where $*$ represents terms which can be explicitly computed but are not necessary at this order in the perturbation calculation.
Thus,
\[
\delta\WM(\mu,0)=\delta\WM(0,0)+
\left(\sum_{j=1}^3\frac{\partial}{\partial\mu}Y_{j}(T,\mu,0)\big{|}_{\mu=0}\otimes e_j\right)\mu+\mathcal{O}(|\mu|^2)
\]
where $e_j$ is the $j^{th}$ column of the identity matrix on $\RM^3$, and $\otimes$ denotes the standard tensor product
of vectors in $\RM^3$.

Finally, we calculate $\WM_0(0,k)$ to first order in $k^2$.  To this end, notice that by the specific
forms of the first order variations of $Y_1$, $Y_2$, and $Y_3$ in $\mu$,
%
it is enough to compute the $k^2$ variation in the translation direction, i.e. we need only calculate
$\frac{\partial}{\partial k^2}Y_1(T,0,k)$ at $k=0$.  Variation of parameters yields
\begin{align*}
\frac{\partial}{\partial k^2}Y_1(T,0,k)\big{|}_{k=0}&=\WM(T,0,0)\int_0^T\WM(x,0,0)^{-1}\left(
                                                                                        \begin{array}{c}
                                                                                          0 \\
                                                                                          0 \\
                                                                                          u_{xx} \\
                                                                                        \end{array}
                                                                                      \right)dx
\\
&=\left(
    \begin{array}{c}
      \frac{\partial u_{-}}{\partial E}\int_0^Tu_x^2dx \\
      * \\
      (c-f'(u_{-}))\frac{\partial u_{-}}{\partial E}\int_0^Tu_x^2dx \\
    \end{array}
  \right)
\end{align*}
where the term $*$ is as before.  
Thus, we see that $\det\left(\delta\WM(\mu,k)\right)$ can be expanded as
\begin{equation}
\det\left(\delta\WM(0,0)+\left(\sum_{j=1}^3\frac{\partial}{\partial\mu}Y_{j}(T,\mu,0)\big{|}_{\mu=0}\otimes e_j\right)\mu
    +\left(\frac{\partial}{\partial k^2}Y_1(T,0,k)\big{|}_{k=0}\otimes e_1\right)k^2\right)+\mathcal{O}(|\mu|^4+k^4)\label{det1}
\end{equation}
in a neighborhood of $(\mu,k)=(0,0)$.  The matrix involved in the determinant in \eqref{det1}
can be simplified via a few elementary row operations.  In particular by replacing the third row with the
first row times the quantity $c-f'(u_{-})$ plus the third row, and replacing the first column with the first
column minus $u_{-}\mu$ times the first plus $u_{-}^2\mu/2$ times the third it follows that the determinant above
is equal to that of the matrix
\begin{align*}
\left(
  \begin{array}{ccc}
    k^2\frac{\partial u_{-}}{\partial E}\int_0^Tu_x^2dx  & \left(-\frac{d u_{-}}{da}\int_0^Tu_adx+\frac{du_{-}}{dE}\int_0^Tuu_adx\right)\mu &
                    \left(-\frac{d u_{-}}{da}\int_0^Tu_Edx+\frac{du_{-}}{dE}\int_0^Tuu_Edx\right)\mu \\
    -\mu V'(u_{-})T_c & V'(u_{-})T_a & V'(u_{-})T_E \\
    0 & -\mu\int_0^Tu_adx & -\mu\int_0^Tu_Edx \\
  \end{array}
\right).
\end{align*}
A straightforward computation now yields
\[
\frac{1}{2}\frac{\partial^3}{\partial\mu\partial k^2}\det\left(\delta\WM(\mu,k)\right)\big{|}_{(\mu,k)=(0,0)}=-\{T,M\}_{a,E}\int_0^Tu_x^2dx
\]
as claimed.
\end{proof}

From Lemma \ref{ZKEvans:lem}, we see that the dominant balance of the equation $D(\mu,k,1)=0$ for small $\mu$ and $k$
is a homogeneous polynomial of degree three in the variables $\mu$ and $k$.  In particular, the three
periodic eigenvalues at $\mu=0$ bifurcate analytically from the origin in the transverse wave-number $k$.
If we define $\mu=\alpha k+\mathcal{O}(k^2)$ for $|k|\ll 1$, it follows that $\alpha$ must be
a root of the polynomial
\[
Q(y):=-\frac{y^3}{2}\{T,M,P\}_{a,E,c}+\frac{y}{2}\{T,M\}_{a,E}\int_0^Tu_x^2dx.
\]
Our main theorem on the transverse instability of the periodic traveling wave solutions of the gKdV
in the ZK equation now follows easily.

\begin{thm}\label{transverse:ZK}
If $\{T,M,P\}_{a,E,c}$ is non-zero at $(a_0,E_0,c_0)\in\Omega$,
then the spectrally stable periodic traveling wave solution $u(x;a_0,E_0,c_0)$ of \eqref{gkdv}
is spectrally unstable to long wavelength transverse perturbations in the gZK equation \eqref{gZK}
if $\{T,M\}_{a,E}>0$ at $(a_0,E_0,c_0)$.
\end{thm}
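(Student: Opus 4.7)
The strategy is to solve the leading-order balance provided by Lemma \ref{ZKEvans:lem} for the eigenvalues $\mu$ bifurcating off the triple zero at the origin as the transverse wavenumber $k$ is turned on. First, the hypotheses force $\{T,M,P\}_{a,E,c} > 0$: one-dimensional spectral stability of $u$, together with parts (2) and (3) of Lemma \ref{perkdv:lem}, rules out $\{T,M,P\}_{a,E,c} < 0$, since otherwise the intermediate value theorem applied to $D(\mu,0,1)$ on $(0,\infty)$ would produce a real positive zero, while the non-degeneracy hypothesis $\{T,M,P\}_{a,E,c} \neq 0$ rules out equality.

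Next, I would set $\mu = \alpha k + \mathcal{O}(k^2)$ and substitute into the normal form from Lemma \ref{ZKEvans:lem}; dividing by $k^3$ and letting $k \to 0$ yields $Q(\alpha) = 0$, whose roots are $\alpha = 0$ and
\[
\alpha = \pm \sqrt{\frac{2\{T,M\}_{a,E}\int_0^T u_x^2\,dx}{\{T,M,P\}_{a,E,c}}}.
\]
Under the hypothesis $\{T,M\}_{a,E} > 0$, combined with the previously established $\{T,M,P\}_{a,E,c} > 0$ and the elementary fact $\int_0^T u_x^2\,dx > 0$ (as $u$ is non-constant), the radicand is strictly positive, so the two non-zero candidate slopes $\alpha_\pm$ are real.

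To promote these formal slopes to honest spectral curves, I would exploit the symmetries from Lemma \ref{perkdv:lem}: $D(\mu,k,1)$ is odd in $\mu$ and even in $k$, so the quotient $g(\mu^2,k^2) := D(\mu,k,1)/\mu$ is real-analytic in $(\mu^2,k^2)$ near the origin, with $g(0,0) = 0$ and $\partial_{\mu^2} g(0,0) = -\tfrac{1}{2}\{T,M,P\}_{a,E,c} \neq 0$. The implicit function theorem then yields a unique analytic branch $\mu^2 = \alpha_+^2 k^2 + \mathcal{O}(k^4)$ on which $g$ vanishes; since this $\mu^2$ is strictly positive for $0 < |k| \ll 1$, it produces a real positive periodic eigenvalue of $\partial_x(\mathcal{L}[u]+k^2)$, establishing the transverse instability. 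The main obstacle—the computation of $D_{\mu k k}(0,0,1)$—has already been handled in Lemma \ref{ZKEvans:lem}, so the remainder of the argument is a factorization followed by a routine implicit function theorem application.
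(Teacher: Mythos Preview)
Your proposal is correct and follows essentially the same approach as the paper: both arguments deduce $\{T,M,P\}_{a,E,c}>0$ from one-dimensional spectral stability via Lemma~\ref{perkdv:lem}, substitute $\mu=\alpha k$ into the normal form of Lemma~\ref{ZKEvans:lem}, and read off the non-zero real slopes when $\{T,M\}_{a,E}>0$. The only cosmetic difference is that the paper justifies the analytic bifurcation of the three eigenvalue branches by appealing to the Newton-diagram machinery already invoked in the proof of Lemma~\ref{ZKEvans:lem}, whereas you make this step self-contained by factoring $D(\mu,k,1)=\mu\,g(\mu^2,k^2)$ and applying the implicit function theorem directly; both routes yield the same branches and the same conclusion.
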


\begin{proof}
By solving the equation $Q(y)=0$, Lemma \ref{ZKEvans:lem} implies that
there are three periodic eigenvalues in a neighborhood of the origin which are given by $\mu_0=o(k)$ and
\[
\mu_{\pm}=\pm|k|\sqrt{\frac{4\{T,M\}_{a,E}\int_0^Tu_x^2dx}{\{T,M,P\}_{a,E,c}}}+o(k).
\]
Since $u(x;a,E,c)$ was assumed to be a spectrally stable solution of \eqref{gkdv}, we know from Lemma \ref{perkdv:lem}
that $\{T,M,P\}_{a,E,c}>0$ and hence there will be two (non-zero) periodic
eigenvalues off the imaginary axis in the neighborhood of the origin if $\{T,M\}_{a,E}>0$.
\end{proof}

\begin{rem}\label{rem1}
Notice that if $\{T,M\}_{a,E}$ is negative, Theorem \ref{transverse:ZK} provides no information about the
transverse instabilities of periodic traveling wave solutions to the gKdV in the gZK equation.  This follows
from the fact that one eigenvalue is just $o(k)$, but we have no information about its confinement to the imaginary
axis.  Moreover, it is clear that Theorem \ref{transverse:ZK} can be extended to the case where $\{T,M,P\}_{a,E,c}=0$
since $D(\mu,k,1)$ is odd in $\mu$ and the leading order variation in $\mu$ must be negative by Lemma \ref{perkdv:lem}.
However, the vanishing of $\{T,M,P\}_{a,E,c}$ will force several branches of spectrum to bifurcate non-analytically from the origin.
\end{rem}

We now point out a few corollaries to Theorem \ref{transverse:ZK} in the case of dnoidal-like solutions
for power-law nonlinearities $f(u)=u^{p+1}$, $p\geq 1$.  By dnoidal-like solutions
we mean solutions which are bounded by a homoclinic or heteroclinic orbit in phase space, much like the
well known Jacobi $dn$ solutions of the focusing mKdV: see [12] for example.
To begin, we recall from [10] that the induced scaling in the wave-speed
yields the following asymptotic relations in a neighborhood
of a homoclinic orbit\footnote{Here, we use the notation $f(z)\sim g(z)$ as $z\to z_0$ to mean
that $\lim_{z\to z_0}f(z)/g(z)=1$ if $\lim_{z\to z_0}g(z)\neq 0$.}:
\begin{align}
\{T,M\}_{a,E}\sim -T_E M_a,\;\;\{T,M,P\}_{a,E,c}\sim-T_E M_a\left(\frac{2}{pc}-\frac{1}{2c}\right)P.\label{Jacobian-Limit}
\end{align}
If we now restrict to dnoidal-like solutions,
then, at least for $a$ sufficiently small, it is known by a result of Schaaf [22] that $T_E>0$ (see [10] or [17]).
Moreover, it was shown in [10] and [17] that $M_a<0$
for solutions of sufficiently large period, i.e. for orbits sufficiently close to the homoclinic orbit, and $|a|$ sufficiently small.
Thus, we immediately have the following corollary of Theorem \ref{transverse:ZK}.

\begin{corr}\label{transversecorr1:ZK}
In the case of a power-law nonlinearity, the dnoidal-like periodic traveling wave solutions of \eqref{gkdv} 
of sufficiently large period and $|a|$ sufficiently small are transversely unstable in the gZK equation \eqref{gZK}.  Moreover,
the periodic traveling wave solutions sufficiently close to a stationary solution in phase space
are also transversely unstable in the gZK equation.
\end{corr}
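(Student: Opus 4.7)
The plan is to apply Theorem~\ref{transverse:ZK}, which requires verifying both the one-dimensional spectral stability condition $\{T,M,P\}_{a,E,c}>0$ and the transverse instability condition $\{T,M\}_{a,E}>0$ at the points of $\Omega$ in question. The main tool is the pair of asymptotic relations \eqref{Jacobian-Limit}: both Jacobians factor through the common quantity $-T_E M_a$, with the three-by-three Jacobian carrying an additional factor $\left(\tfrac{2}{pc}-\tfrac{1}{2c}\right)P$.

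For the first assertion I would work on the dnoidal-like branch of sufficiently large period with $|a|$ small. Schaaf's monotonicity theorem [22] supplies $T_E>0$ for $|a|$ small, and the asymptotic analyses in [10] and [17] give $M_a<0$ for orbits of sufficiently large period with $|a|$ small; these are exactly the sign statements recorded in the paragraph preceding the corollary. Combined with the first relation in \eqref{Jacobian-Limit}, they yield $\{T,M\}_{a,E}\sim -T_E M_a>0$ at once. The second relation then shows that $\{T,M,P\}_{a,E,c}$ has the sign of $-T_E M_a$ times $\left(\tfrac{2}{pc}-\tfrac{1}{2c}\right)P$, which is also positive, since $P=\int_0^T u^2\,dx>0$ on nonconstant orbits and the prefactor is positive in the range of $p$ for which the underlying waves are spectrally stable. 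Theorem~\ref{transverse:ZK} then delivers the claimed transverse instability.

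The second assertion, about orbits close to a stationary point in phase space, is handled by carrying out the same program in the small-amplitude regime. Expanding the effective potential $V(u;a,c)$ about its nondegenerate local minimum and substituting into \eqref{period}, \eqref{mass}, and \eqref{momentum} produces leading-order expressions for $T$, $M$, and $P$ whose partial derivatives with respect to $a$ and $E$ can be computed directly from the expansion; these signs then feed into \eqref{Jacobian-Limit} exactly as in the near-homoclinic case and again verify both hypotheses of Theorem~\ref{transverse:ZK}.

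The principal obstacle is really a bookkeeping one: in each regime one must check that the parameter windows on which $T_E>0$ and $M_a<0$ individually hold actually overlap with the joint asymptotic limit (large period, $|a|$ small, or small amplitude) claimed in the statement, and that the asymptotic equivalences in \eqref{Jacobian-Limit} are sharp enough to transfer the signs from $-T_E M_a$ to the Jacobians themselves. Once this is verified, the invocation of Theorem~\ref{transverse:ZK} is entirely mechanical.
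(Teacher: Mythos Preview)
Your argument covers only part of the claim. Theorem~\ref{transverse:ZK} applies to waves that are already one-dimensionally spectrally stable, and by Lemma~\ref{perkdv:lem} this forces $\{T,M,P\}_{a,E,c}>0$. From \eqref{Jacobian-Limit} this sign is $-T_EM_a\cdot\bigl(\tfrac{2}{pc}-\tfrac{1}{2c}\bigr)P$, and the factor $\tfrac{2}{pc}-\tfrac{1}{2c}$ is positive only for $p<4$. Your sentence ``the prefactor is positive in the range of $p$ for which the underlying waves are spectrally stable'' is therefore circular: it does not produce a proof for $p>4$, it merely observes that Theorem~\ref{transverse:ZK} is inapplicable there. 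But the corollary asserts transverse instability for \emph{all} power-law nonlinearities, so the case $p>4$ must be handled by a separate mechanism.

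The paper closes this gap not through Theorem~\ref{transverse:ZK} but through Lemma~\ref{perkdv:lem} directly: when $p>4$ one has $\{T,M,P\}_{a,E,c}<0$ near the homoclinic, so parts~(2) and~(3) of Lemma~\ref{perkdv:lem} force $D(\mu,0,1)$ to have a strictly positive real root $\mu_*$. By continuity of $D$ in $k$, the function $D(\mu,k,1)$ retains a root with positive real part for $|k|\ll 1$, which is already an instability in the gZK problem. In other words, for $p>4$ the transverse instability is inherited from the one-dimensional instability rather than produced by the bifurcation analysis behind Theorem~\ref{transverse:ZK}. You should add this case split explicitly.

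A smaller point: in your treatment of the second assertion you say the small-amplitude signs ``feed into \eqref{Jacobian-Limit} exactly as in the near-homoclinic case.'' The relations \eqref{Jacobian-Limit} are specifically homoclinic asymptotics and do not apply near the equilibrium; you need an independent small-amplitude expansion of $\{T,M\}_{a,E}$ and $\{T,M,P\}_{a,E,c}$ there (which you do sketch), together with the same $p<4$ versus $p>4$ split as above.
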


\begin{proof}
By Lemma \ref{perkdv:lem} and \eqref{Jacobian-Limit}, such a solution is unstable to long wavelength
transverse perturbations in the ZK equation if $p<4$.  Moreover, if $p>4$ then $\{T,M,P\}_{a,E,c}<0$
and hence the function $D(\mu,0,1)$ has a non-zero real root $\mu_*>0$.
By continuity the function $D(\mu,k,1)$ then, will have a root near $\mu_*$ with positive real part for $|k|\ll 1$.
Thus, although we are not guaranteed a long wavelength transverse instability in this case, we still
have spectral instability none the less.
\end{proof}

In the special case of the KdV equation ($f(u)=u^2$), we are able to use the integrability
of the traveling wave ODE \eqref{travelode} to show that $\{T,M\}_{a,E}>0$ 
for all $(a,E,c)\in\Omega$ (see [17]).  Moreover, up to Lie symmetries (Galilean invariance, spatial translation, etc.),
all periodic traveling wave solutions of the KdV equation can be expressed in terms of Jacobi elliptic functions as
\[
u(x,t)=6\alpha^2 \gamma^2 \cn^2(\alpha x+4\alpha^2(2\gamma^2-1)t;\gamma),
\]
where $\alpha>0$ is a scaling parameter and $\gamma\in[0,1)$ is the elliptic modulus.
In [8], it was shown that all such solutions
of the KdV are spectrally stable to localized perturbations by using the inverse scattering
transform.  It follows by Lemma \ref{perkdv:lem} that $\{T,M,P\}_{a,E,c}\geq 0$ in this case.
This result can be directly verified without the machinery of the inverse scattering transform 
by using standard elliptic function arguments.  In particular, noticing that in the case of the KdV
the quantities $T$, $M$, and $P$ can be represented by Abelian
integrals of the first, second, and third kinds, respectively, on a Riemann surface it can
be shown that the Jacobian $\{T,M,P\}_{a,E,c}$ can be expressed explicitly as
\[
\{T,M,P\}_{a,E,c}=\frac{T^3\left(E-V\left(M/T\right)\right)}{2\;{\rm disc}(R(u;a,E,c))^3},
\]
where $R(u;a,E,c):=E-V(u;a,c)$ and ${\rm disc}(R)$ denotes the discriminant of the polynomial $R$
(see [11] for details).
Since $R(u;a,E,c)$ clearly has three real roots for each $(a,E,c)\in\Omega$, and since $V(M/T)<E$,
it follows that $\{T,M,P\}_{a,E,c}>0$ for all $(a,E,c)\in\Omega$ in the case of the KdV
equation.  Therefore, Theorem \ref{transverse:ZK} implies the cnoidal solutions of the
KdV are spectrally unstable to long wavelength transverse
perturbations in the ZK equation.  We record this observation in the following corollary.


\begin{corr}
Periodic traveling wave solutions of the KdV with $(a_0,E_0,c_0)\in\Omega$
are spectrally unstable to long wavelength transverse perturbations in the ZK equation.
\end{corr}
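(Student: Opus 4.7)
The plan is to reduce the statement to a direct application of Theorem \ref{transverse:ZK}. That theorem requires two strict positivity conditions at the parameter value of interest, namely $\{T,M,P\}_{a,E,c}>0$ (ensuring one-dimensional spectral stability through Lemma \ref{perkdv:lem}) and $\{T,M\}_{a,E}>0$ (ensuring that the bifurcation of periodic eigenvalues from the origin leaves the imaginary axis). Since the corollary claims the conclusion for every $(a_0,E_0,c_0)\in\Omega$, I would prove both inequalities hold uniformly on $\Omega$ in the KdV case $f(u)=u^2$.

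For the $3\times 3$ Jacobian I would use the explicit representation
\[
\{T,M,P\}_{a,E,c}=\frac{T^3\bigl(E-V(M/T;a,c)\bigr)}{2\,\operatorname{disc}(R(\cdot;a,E,c))^3}
\]
recalled in the paragraph preceding the corollary, whose derivation in [11] proceeds by writing $T$, $M$, $P$ as Abelian integrals of the first, second, and third kinds on the hyperelliptic curve $y^2=2R(u;a,E,c)$ with $R(u;a,E,c)=E-V(u;a,c)$, and reducing the resulting $3\times 3$ determinant by residue manipulations. Positivity then follows from two observations: (i) for $f(u)=u^2$ the potential $V$ is cubic, so for $(a,E,c)\in\Omega$ the polynomial $R$ has three distinct real roots and $\operatorname{disc}(R)>0$; and (ii) the period-average $M/T$ lies strictly in the interior $(u_-,u_+)$ of the orbit, where $V<E$ by the very definition of the turning points, so the numerator is strictly positive.

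For the $2\times 2$ Jacobian, the identity $\{T,M\}_{a,E}>0$ throughout $\Omega$ is already established in [17] through direct elliptic-function computation using the cnoidal parametrization $u(x,t)=6\alpha^2\gamma^2\cn^2\bigl(\alpha x+4\alpha^2(2\gamma^2-1)t;\gamma\bigr)$, and I would cite this verbatim rather than redo the computation. With both strict inequalities in hand, Theorem \ref{transverse:ZK} immediately produces the pair of non-imaginary eigenvalues $\mu_\pm=\pm|k|\sqrt{4\{T,M\}_{a,E}\int_0^T u_x^2\,dx/\{T,M,P\}_{a,E,c}}+o(k)$ for $0<|k|\ll 1$, giving the claimed long-wavelength transverse spectral instability.

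The only substantive obstacle is producing the closed-form ratio for $\{T,M,P\}_{a,E,c}$; once that identity is in hand, positivity of both factors is forced by the cubic structure of $V$ and Jensen-type positioning of $M/T$ inside $(u_-,u_+)$, and the proof is an assembly of the inequalities with Theorem \ref{transverse:ZK}. Note that spectral stability in one dimension is doubly confirmed: by the inverse scattering result [8] and by our positivity of the Jacobian together with Lemma \ref{perkdv:lem}.
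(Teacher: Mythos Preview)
Your proposal is correct and follows essentially the same route as the paper: the paragraph preceding the corollary verifies $\{T,M\}_{a,E}>0$ by citing [17], establishes $\{T,M,P\}_{a,E,c}>0$ via the explicit Abelian-integral identity from [11] together with the observations that $R$ has three real roots and $V(M/T)<E$, and then invokes Theorem~\ref{transverse:ZK}. Your justification for $E-V(M/T)>0$ via the positioning of the average $M/T$ inside $(u_-,u_+)$ is slightly more explicit than the paper's one-line assertion, but the overall argument is the same.
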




Finally, we end by analyzing Theorem \ref{transverse:ZK} in the case of the focusing mKdV equation
\begin{equation}
u_t=u_{xxx}+u^2u_x.\label{mkdv}
\end{equation}
This equation admits two distinct families of solutions expressible in terms of Jacobi elliptic functions: up to Lie
symmetries, they are given by
\begin{align}
u(x,t)&=\sqrt{2}\alpha\dn(\mu x+\alpha^2(2-\gamma^2);\gamma),\label{mkdv:dn}\\
u(x,t)&=\sqrt{2}\alpha \gamma\cn(\mu x+\alpha^2(-1+2\gamma^2);\gamma)\label{mkdv:cn}.
\end{align}
In contrast to the case of the KdV, where the elliptic function solutions represented all periodic traveling
wave solutions, the solutions \eqref{mkdv:dn} and \eqref{mkdv:cn} only represent the
periodic traveling wave solutions of \eqref{mkdv} with $a=0$.  Stability calculations for
such solutions has been recently carried out in [11] and [12] in the context of nonlinear and spectral
stability to various classes of perturbations.  Our theory applies not only to these elliptic function solutions, but to
all periodic traveling wave solutions of the mKdV equation.
In [11], the signs of the Jacobians $\{T,M\}_{a,E}$ and $\{T,M,P\}_{a,E,c}$
were computed numerically for all $(a,E,+1)\in\Omega$ using the Abelian integral representation
of the necessary quantities.  In particular, it was shown that for all $(a,E,1)\in\Omega$ where $\{T,M\}_{a,E}$
is negative, the periodic stability index $\{T,M,P\}_{a,E,c}$ is also negative and hence one has
instability for all $(a,E,1)\in\Omega$.  We record this observation in the following corollary.

\begin{corr}\label{transversecorr2:ZK}
Periodic traveling waves of the focusing mKdV equation are unstable to
transverse perturbations in the focusing mZK equation.
\end{corr}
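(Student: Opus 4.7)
The plan is to combine Theorem \ref{transverse:ZK} with the continuity argument used in the proof of Corollary \ref{transversecorr1:ZK}, applied on complementary parameter regimes, treating the numerical sign computations of [11] as black-box input. First I would exploit the scaling invariance of the focusing mKdV: if $u(x,t)$ solves \eqref{mkdv} then so does $\lambda u(\lambda x,\lambda^3 t)$, with wave speed $c\mapsto \lambda^2 c$. A direct computation shows that under this rescaling the parameters transform as $(a,E,c)\mapsto(\lambda^3 a,\lambda^4 E,\lambda^2 c)$ while $T\mapsto T/\lambda$, $M\mapsto M$, $P\mapsto\lambda P$, so each Jacobian $\{T,M\}_{a,E}$ and $\{T,M,P\}_{a,E,c}$ is multiplied by a positive power of $\lambda$; in particular their signs are preserved. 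Since only $c>0$ is considered, it therefore suffices to establish the corollary on the slice $\Omega\cap\{c=1\}$, which is exactly the regime in which [11] performs the sign computation.

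Next I would partition this slice by the sign of the one-dimensional periodic stability index $\{T,M,P\}_{a,E,c}$. On the open subset where $\{T,M,P\}_{a,E,c}>0$, the contrapositive of the implication ``$\{T,M\}_{a,E}<0 \Rightarrow \{T,M,P\}_{a,E,c}<0$'' recorded just before the statement forces $\{T,M\}_{a,E}\geq 0$, and [11] shows this inequality is strict away from a codimension-one curve, so Theorem \ref{transverse:ZK} delivers long-wavelength transverse spectral instability directly. On the open subset where $\{T,M,P\}_{a,E,c}<0$, I would argue as in the proof of Corollary \ref{transversecorr1:ZK}: by Lemma \ref{perkdv:lem}, $D(\mu,0,1)>0$ for $\mu>0$ small while $\lim_{\mu\to+\infty}\operatorname{sign}D(\mu,0,1)<0$, so the intermediate value theorem produces a real root $\mu_*>0$, which persists as a root of $D(\cdot,k,1)$ with positive real part for $|k|\ll 1$ by continuity in $k$. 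This is already a one-dimensional instability and \emph{a fortiori} a transverse one. The thin locus $\{T,M,P\}_{a,E,c}=0$ is absorbed into the first regime by the extension of Theorem \ref{transverse:ZK} noted in Remark \ref{rem1}, once the sign data from [11] are invoked to exclude a stable configuration there.

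The main obstacle, and the reason the statement is phrased as a corollary of the numerical work of [11], is the absence of a uniform elementary parametrization of the mKdV traveling waves for $a\neq 0$: the closed-form representations \eqref{mkdv:dn} and \eqref{mkdv:cn} only exhaust the $a=0$ slice. Replacing the numerics by a purely analytic argument would require either closed-form manipulation of the Abelian-integral representations of $T$, $M$, and $P$ on the underlying elliptic Riemann surface, in the spirit of the explicit KdV computation leading to the previous corollary, or a monotonicity/continuation argument propagated outward from the $a=0$ boundary (where the $\dn$ and $\cn$ solutions live and the Jacobians can be evaluated in terms of complete elliptic integrals).
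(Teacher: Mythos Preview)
Your proposal is correct and follows essentially the same approach as the paper: reduce to the slice $c=1$ (which the paper leaves implicit by citing [11]'s computation on $(a,E,1)\in\Omega$), invoke the numerical implication $\{T,M\}_{a,E}<0\Rightarrow\{T,M,P\}_{a,E,c}<0$ from [11], and then combine Theorem \ref{transverse:ZK} on the stable regime with the continuity argument from Corollary \ref{transversecorr1:ZK} on the one-dimensionally unstable regime. Your treatment is in fact more careful than the paper's terse paragraph, since you make the scaling reduction explicit and address the degenerate loci $\{T,M,P\}_{a,E,c}=0$ via Remark \ref{rem1}.
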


\section{Transverse Instabilities in the gBBM-ZK equation}

In this section, we consider the spectral instability of a periodic traveling wave solution of the 
gBBM equation to long wavelength transverse perturbations in the gBBM-ZK equation.  We begin by briefly
reviewing the basic properties of the periodic traveling waves of \eqref{gbbm} (see [18] for more details).
For each wave speed $c>1$, equation \eqref{gbbm} admits traveling wave solutions whose profiles
are solutions of the traveling wave ODE
\begin{equation}
cu_{xxt}-(c-1)u_x+f(u)_x=0,\label{travelode2}
\end{equation}
i.e. they are stationary solutions of \eqref{gbbm} in the traveling coordinate frame defined by $x-ct$.
Clearly \eqref{travelode2} defines a Hamiltonian ODE and can be reduced to quadrature: in particular,
the traveling wave profiles satisfy the relation
\[
\frac{c}{2}u_x^2-\left(\frac{c-1}{2}\right)u^2+F(u)=au+E
\]
where $a$ and $E$ are constants of integration and $F'=f$ with $F(0)=0$.  In order
to ensure the existence of periodic solutions of \eqref{travelode2}, we must
require that the effective potential
\[
G(u;a,c)=F(u)-\frac{c-1}{2}u^2-au
\]
have a non-degenerate local minimum.  As in the case of the gKdV equation, it follows that
the periodic solutions of \eqref{travelode2} comprise a four parameter
family of solutions $u(x+x_0;a,E,c)$ while the stationary solutions form a codimension two
subset.  Moreover, we mod out the translation invariance and hence only consider
the periodic solutions of \eqref{travelode2} as consisting of a three parameter family
indexed by the quantities $a$, $E$, and $c$.  

The partial differential equation \eqref{gbbm} has, in general, three conserved quantities
\begin{eqnarray}
M&=&\int_0^T\left(u-u_{xx}\right)dx\nonumber\\
P&=&\frac{1}{2}\int_0^T\left(u^2+u_x^2\right)dx\label{momentum}\\
H&=&\int_0^T\left(\frac{1}{2}u^2+F(u)\right)dx\nonumber
\end{eqnarray}
which correspond to the mass, momentum, and Hamiltonian (energy) of the solution, respectively.  These three
quantities are considered as functions of the traveling wave parameters $a$, $E$, and $c$ and their
gradients with respect to these parameters will play an important role in the foregoing analysis.  It
is important to notice that when restricted to the four-parameter family of periodic traveling wave
solutions of \eqref{gbbm}, the mass can be represented as $M=\int_0^Tu\;dx$.  Since all our results
concern this four-parameter family, we will always work with this simplified expression for the mass
functional.

We now begin our transverse instability analysis in this case.  Suppose we have a periodic traveling wave solution
$u(x;a,E,c)$ of the gBBM equation.  We wish to examine the spectral stability of $u$ to long wavelength 
transverse perturbations in the gBBM-ZK equation \eqref{gbbmzk}.  Clearly, $u$ is a $y$-independent
solution of the traveling gBBM-ZK equation
\begin{equation}
u_t-\left(u_{xt}+u_{yy}\right)_x-(c-1)u_x+f(u)_x+cu_{xxx}=0.\label{travelbbmzk}
\end{equation}
Linearizing \eqref{travelbbmzk} about $u$ yields the linear partial differential equation
\begin{equation*}
\mathcal{D}v_t=\partial_x\left(\mathcal{R}[u]+\partial_y^2\right)v,
\end{equation*}
where $\mathcal{D}=1-\partial_x^2$ and $\mathcal{R}[u]=-c\partial_x^2+(c-1)-f'(u)$.  
Seeking separated solutions of the form
\[
v(x,y,t)=v(x)e^{\mu t-iky}
\]
where $\mu\in\CM$ and $k\in\RM$ is the transverse wave number leads to the (ODE) spectral
problem
\[
\partial_x\left(\mathcal{R}[u]+k^2\right)v=\mu\mathcal{D}v
\]
considered on $L^2(\RM)$.  Notice that the operator $\mathcal{D}$ is positive and invertible on $L^2(\RM)$.
In particular, for each fixed $k\in\RM$ the spectrum of the operator $\mathcal{D}^{-1}\partial_x\left(\mathcal{R}[u]+k^2\right)$
is purely continuous and can be described via Floquet theory.  In particular, for a given $k\in\RM$
we have that $\mu\in{\rm spec}\left(\mathcal{D}^{-1}\partial_x\left(\mathcal{R}[u]+k^2\right)\right)$
if and only if there exists a $\lambda\in S^1$ such that
\[
D(\mu,k,\lambda):=\det\left(\MM(\mu,k)-\lambda {\bf I}\right)
\]
vanishes, where $\MM(\mu,k)$ is the monodromy map for the corresponding first order system
\begin{equation}
{\bf Y}_x=\HM(x,\mu,k){\bf Y},~~{\bf Y}(0,\mu,k)={\bf I},\label{bbmzksystem}
\end{equation}
where
\[
\HM(x,\mu,k)=\left(
               \begin{array}{ccc}
                 0 & 1 & 0 \\
                 0 & 0 & 1 \\
                 -\frac{1}{c}\left(\mu+f''(u)u_x\right) & \frac{1}{c}\left(c-1+k^2-f'(u)\right) & \frac{\mu}{c} \\
               \end{array}
             \right)
\]
We now state the main technical lemma for this section.

\begin{lem}\label{bbmzklem}
The equation $D(\mu,k,1)$ has the following local normal form in a neighborhood of the origin $(\mu,k)=(0,0)$:
\[
-\mu^3\{T,M,P\}_{a,E,c}+2\mu k^2\{T,M\}_{a,E}\int_0^Tu_x^2dx+\mathcal{O}\left(|\mu|^4+k^4\right).
\]
\end{lem}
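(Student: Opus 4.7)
The plan is to mirror the proof of Lemma \ref{ZKEvans:lem} with the modifications appropriate to the gBBM-ZK system \eqref{bbmzksystem}. The principal new features are the overall $1/c$ rescaling in $\HM$ and the extra diagonal entry $\mu/c$ coming from the BBM dispersion $-u_{xxt}$ rather than $u_{xxx}$; together these account for the factor-of-two change in the coefficients of $\mu^3$ and $\mu k^2$ relative to the gZK normal form.

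First, differentiating the traveling wave quadrature $cu_{xx} - (c-1)u + f(u) = a$ with respect to $x$, $a$, and $E$ produces
\[
\mathcal{R}[u] u_x = 0, \qquad \mathcal{R}[u] u_a = -1, \qquad \mathcal{R}[u] u_E = 0,
\]
so the three vector solutions $Y_j(x,0,0)$ built from $(u_x, u_a, u_E)$ and their $x$-derivatives are linearly independent and assemble into a matrix solution $\WM(x,0,0)$ with the same layout as in Lemma \ref{ZKEvans:lem}. Normalize $\WM(0,\mu,k) = \WM(0,0,0)$ and set $\delta\WM(\mu,k) = \WM(T,\mu,k) - \WM(0,\mu,k)$; then $D(\mu,k,1) = -\det\delta\WM(\mu,k)$ since $\det\WM(0,0,0) = -1$. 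Using periodicity together with the chain rule applied to the identities $u(T) = u_-$, $u_x(T) = 0$, $u_{xx}(T) = -G'(u_-)/c$, one finds that $\delta\WM(0,0)$ is a rank-one matrix whose only non-trivial row is $\bigl(0,\; G'(u_-)T_a/c,\; G'(u_-)T_E/c\bigr)$.

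Next, compute $\partial_\mu Y_j(T,\mu,0)|_{\mu=0}$ and $\partial_{k^2}Y_1(T,0,k)|_{k=0}$ by variation of parameters applied to \eqref{bbmzksystem}, using
\[
\partial_\mu \HM\big|_{\mu=0} = \tfrac{1}{c}\bigl(-e_3\otimes e_1 + e_3\otimes e_3\bigr), \qquad \partial_{k^2}\HM\big|_{k=0} = \tfrac{1}{c}\,e_3\otimes e_2.
\]
The feature that is absent in the gZK computation is the $e_3\otimes e_3$ piece of $\partial_\mu\HM$, which pulls in the third components $u_{jxx}$ of each $Y_j$ rather than only the first components $u_j$. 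After using $cu_{xxx} = (c-1-f'(u))u_x$ and the analogous identities obtained by differentiating the $a$- and $E$-quadratures, the resulting integrands reduce to combinations of $\int_0^T u_a\, dx$, $\int_0^T u u_a\, dx$, $\int_0^T u_E\, dx$, $\int_0^T u u_E\, dx$, and $\int_0^T u_x^2 \, dx$, which are precisely the $a$- and $E$-derivatives of $T$ and of the restricted mass functional $M = \int_0^T u\, dx$.

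Finally, substitute the expansions into the $3\times 3$ determinant and perform elementary row and column operations as in the proof of Lemma \ref{ZKEvans:lem}: replace the third row by the third row plus $(c-1-f'(u_-))/c$ times the first, and adjust the first column to eliminate boundary-derivative cross-terms. The residual factors of $c$ combine with the $1/c$ prefactors in $\partial_\mu\HM$ and $\partial_{k^2}\HM$ to produce exactly the claimed coefficients $-\{T,M,P\}_{a,E,c}$ of $\mu^3$ and $2\{T,M\}_{a,E}\int_0^T u_x^2\, dx$ of $\mu k^2$. The main obstacle is bookkeeping: carrying the extra $e_3\otimes e_3$ contribution through variation of parameters, keeping track of the several $1/c$ factors, and verifying that all boundary terms proportional to $du_-/da$ and $du_-/dE$ really combine into the claimed Jacobians rather than some related but different expression. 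It is also crucial to use that on the periodic manifold the conserved mass $\int_0^T (u-u_{xx})\,dx$ reduces to $\int_0^T u\, dx$, so the $M$-gradient extracted from the integrals is indeed the gradient of the gBBM mass.
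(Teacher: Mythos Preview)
Your approach is essentially the same as the paper's: build the matrix solution $\WM$ from $u_x,u_a,u_E$, cite the $\mu$-expansion of $\delta\WM(\mu,0)$ (the paper defers this to [18] rather than re-deriving it, but your identification of the extra $e_3\otimes e_3$ piece of $\partial_\mu\HM$ is exactly what drives that computation), and then compute only the $k^2$-variation of $Y_1$ by variation of parameters before reducing the determinant by row operations.

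There is one bookkeeping slip worth flagging. With your normalization (columns $u_x,u_a,u_E$, no factor of $c$), one has $\det\WM(0,0,0)=-1/c^2$, not $-1$: at $x=0$ the matrix has the block form
\[
\begin{pmatrix} 0 & \partial_a u_- & \partial_E u_- \\ -G'(u_-)/c & 0 & 0 \\ 0 & * & * \end{pmatrix},
\]
and expanding along the first column together with $\partial_E u_-=1/G'(u_-)$ gives $-1/c^2$. Consequently $D(\mu,k,1)=-c^2\det\delta\WM(\mu,k)$, not $-\det\delta\WM$. The paper sidesteps this by loading a factor of $c$ into each column of $\WM$, so that $\det\WM(0,0,0)=-c$ and $D=-\tfrac{1}{c}\det\delta\WM$. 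Either normalization works, but you will need the correct prefactor when you check that the residual powers of $c$ really cancel to give the stated coefficients.
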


\begin{proof}
In [18], it was shown that three linearly independent solutions of the equation $\mathcal{R}[u]v=0$
are given by the functions $u_x$, $u_a$, and $u_E$.  Letting $\WM(x,\mu,k)$ be a matrix
solution of \eqref{bbmzksystem} such that
\[
\WM(x,0,0)=\left(
             \begin{array}{ccc}
               cu_x & cu_{a} & cu_{E} \\
               cu_{xx} & cu_{ax} & cu_{Ex} \\
               cu_{xxx} & cu_{axx} & cu_{Exx} \\
             \end{array}
           \right)
\]
and defining $\delta\WM(\mu,k)=\WM(x,\mu,k)\big{|}_{x=0}^T$, it was shown in [18] that
\[
D(\mu,k,1)=-\frac{1}{c}\det\left(\delta\WM(\mu,k)\right).
\]
and that 
\[
D(\mu,0,1)=-\{T,M,P\}_{a,E,c}\mu^3+\mathcal{O}(|\mu|^4)
\]
for $|\mu|\ll 1$.  Moreover, the first order variations in $\mu$ of the vector solutions $Y_1$, $Y_2$, and $Y_3$
corresponding to the functions $u_x$, $u_a$, and $u_E$ at $\mu=0$ were calculated and essentially
have the same form as those found in the previous section for the gKdV equation.  Due to the particular
form of $\delta\WM(\mu,0)$, it follows then that one only needs to compute the first order variation
in $k^2$ of the vector solution $Y_1$ corresponding to the translation direction $u_x$.  Using
variation of parameters, one can show that
\[
\WM(T,0,0)\int_0^T\WM(z,0,0)^{-1}\left(
                                   \begin{array}{c}
                                     0 \\
                                     0 \\
                                     u_{xx}(z) \\
                                   \end{array}
                                 \right)dz
                                 =
                                 \left(
                                   \begin{array}{c}
                                     c\frac{\partial u(0)}{\partial E}\int_0^Tu_x^2dx \\
                                     * \\
                                     (c-1-f'(u(0)))\frac{\partial u(0)}{\partial E}\int_0^Tu_x^2dx \\
                                   \end{array}
                                 \right)
\]
where the term $*$ can be explicitly computed but is not necessary at this order in the perturbation
argument.  A straight forward calculation then gives
\[
\frac{1}{\mu k^2}\det\left(\delta\WM(\mu,k)\right)\big{|}_{(\mu,k)=(0,0)}=-c\{T,M\}_{a,E}\int_0^Tu_x^2dx.
\]
from which the lemma follows.
\end{proof}

Lemma \ref{bbmzklem} readily yields a necessary condition for the underlying (gBBM) periodic traveling wave
$u(x,a,E,c)$ to exhibit a modulational transverse instability in the gBBM-ZK equation.

\begin{thm}\label{bbmzkthm}
If $\{T,M,P\}_{a,E,c}\neq 0$,  then the spectrally stable periodic traveling wave solution of the gBBM equation
is spectrarlly unstable to long wavelength transverse perturbations in the 
gBBM-ZK equation if $\{T,M\}_{a,E}>0$.
\end{thm}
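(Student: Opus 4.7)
The plan is to follow the same strategy used to prove Theorem \ref{transverse:ZK}, since Lemma \ref{bbmzklem} produces a normal form for $D(\mu,k,1)=0$ near $(\mu,k)=(0,0)$ that is structurally identical to the one obtained in Lemma \ref{ZKEvans:lem} (the only differences are harmless constants). Concretely, I would insert the ansatz $\mu=\alpha k + o(k)$ into the expansion from Lemma \ref{bbmzklem}, divide through by the leading power $k^3$, and conclude that $\alpha$ must be a root of the cubic
\[
Q(\alpha)=-\alpha^{3}\{T,M,P\}_{a,E,c}+2\alpha\{T,M\}_{a,E}\int_{0}^{T}u_{x}^{2}\,dx .
\]
Since $Q$ factors as $\alpha$ times a quadratic, one root gives an eigenvalue curve $\mu_{0}=o(k)$, while the other two yield
\[
\mu_{\pm}=\pm|k|\sqrt{\frac{2\{T,M\}_{a,E}\int_{0}^{T}u_{x}^{2}\,dx}{\{T,M,P\}_{a,E,c}}}+o(k).
\]

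The only substantive ingredient beyond the algebra of solving the cubic is the sign of $\{T,M,P\}_{a,E,c}$ under the spectral stability hypothesis. Here I would invoke the gBBM analogue of Lemma \ref{perkdv:lem} established in [18]: the function $D(\mu,0,1)$ admits the expansion $-\{T,M,P\}_{a,E,c}\mu^{3}+\mathcal{O}(|\mu|^{4})$ and is odd in $\mu$ with a definite negative sign at $\mu\to\infty$, so $\{T,M,P\}_{a,E,c}<0$ forces an odd-order real root of $D(\,\cdot\,,0,1)$ on the positive real axis, contradicting one-dimensional spectral stability. Combined with the standing assumption $\{T,M,P\}_{a,E,c}\neq 0$, stability of the underlying wave therefore yields $\{T,M,P\}_{a,E,c}>0$.

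With both Jacobians positive and $\int_{0}^{T}u_{x}^{2}\,dx>0$ automatically, the quantity under the square root in the expression for $\mu_{\pm}$ is strictly positive, so $\mu_{\pm}$ are nonzero real numbers for all sufficiently small $|k|\neq 0$. These yield two branches of spectrum leaving the imaginary axis horizontally as $k$ is turned on, i.e.\ long wavelength transverse spectral instability.

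The main (indeed essentially only) obstacle is making sure the sign convention in the gBBM setting is consistent with the one in Lemma \ref{perkdv:lem}; everything else is a transcription of the argument already carried out for Theorem \ref{transverse:ZK}. Once the sign of $\{T,M,P\}_{a,E,c}$ is pinned down by spectral stability via the results of [18], the remainder is immediate from Lemma \ref{bbmzklem} and the dominant balance analysis.
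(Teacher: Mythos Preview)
Your proposal is correct and follows essentially the same approach the paper intends: the paper does not write out a separate proof of Theorem~\ref{bbmzkthm} but simply notes that it ``readily'' follows from Lemma~\ref{bbmzklem}, i.e.\ by repeating the dominant-balance argument from Theorem~\ref{transverse:ZK} and invoking the gBBM analogue of Lemma~\ref{perkdv:lem} from [18] to force $\{T,M,P\}_{a,E,c}>0$ under the spectral stability hypothesis. Your write-up makes this implicit argument explicit, with the correct constants.
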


As an application of this theory, one can use the complex analytic methods of Bronski, Johnson, and Kapitula [11]
in order to prove that in the case of the BBM equation with $f(u)=u^2$, one has
\[
\{T,M\}_{a,E}=-\frac{T^3G'\left(\frac{M}{T}\right)}{12~{\rm disc}(E-G(~\cdot~;a,c))}
\]
where $G$ is the effective potential energy in this case.  In particular, since $G'$ is strictly
convex in the case of the BBM equation it follows from Jensen's inequality that
\[
G'\left(\frac{M}{T}\right)<\frac{1}{T}\int_0^TG'(u(x))dx=0
\]
from which it follows that $\{T,M\}_{a,E}>0$ for all such solutions.  In particular, this 
proves the following corollary of Theorem \ref{bbmzkthm}.

\begin{corr}
Let $u(x;a,E,c)$ be a periodic traveling wave solution of the BBM equation.  If $\{T,M,P\}_{a,E,c}$ is
non-zero, then $u$ is spectrally unstable to long wavelength transverse perturbations
in the BBM-ZK equation.
\end{corr}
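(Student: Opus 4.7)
The plan is to invoke Theorem~\ref{bbmzkthm} directly: since the hypothesis $\{T,M,P\}_{a,E,c}\neq 0$ is already assumed, it suffices to verify that the two-dimensional Jacobian $\{T,M\}_{a,E}$ is strictly positive on every admissible parameter triple $(a,E,c)\in\Omega$ corresponding to a BBM periodic traveling wave. Once this sign is established, Theorem~\ref{bbmzkthm} produces two non-zero branches of periodic eigenvalues leaving the imaginary axis in the transverse wavenumber $k$, which is precisely the long-wavelength transverse instability in the BBM-ZK equation claimed by the corollary.

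To obtain that sign I would first derive the closed-form Jacobian representation
\[
\{T,M\}_{a,E} \;=\; -\,\frac{T^{3}\, G'(M/T)}{12\,\mathrm{disc}\bigl(E-G(\,\cdot\,;a,c)\bigr)}
\]
using the Abelian-integral / complex-analytic framework of Bronski, Johnson and Kapitula~[11]. Concretely, $T$ and $M$ can be written as period integrals of differentials of the first and second kind on the elliptic curve $\tfrac{c}{2}y^{2}=E-G(u;a,c)$, and their derivatives in $a$ and $E$ reduce to residue computations via Picard--Fuchs-type relations after the usual regularization at the square-root branch points $u_{\pm}$ (cf.~[10],[17]). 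This is where essentially all of the real work lies, and is what I expect to be the main obstacle.

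Granting the formula, the sign analysis is elementary. For the denominator, on any non-degenerate BBM periodic orbit the cubic $E-G(u;a,c)$ has three distinct real roots (the two simple turning points $u_{\pm}$ plus one further real root lying outside the orbit), and the discriminant of a real cubic with three distinct real roots is positive. For the numerator, differentiating the quadrature identity for \eqref{travelode2} yields the once-integrated traveling-wave equation $cu_{xx}+G'(u;a,c)=0$; integrating this over one period kills the left-hand side by periodicity of $u_{x}$, giving
\[
\int_{0}^{T} G'(u(x))\,dx \;=\; 0.
\]
For $f(u)=u^{2}$ one has $G'(u;a,c)=u^{2}-(c-1)u-a$, which is strictly convex in $u$; since $u$ is genuinely non-constant along any periodic orbit, Jensen's inequality yields the strict bound $G'(M/T)<0$. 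Combining these two observations gives $\{T,M\}_{a,E}>0$, and Theorem~\ref{bbmzkthm} completes the proof.

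In summary, essentially all of the substance is concentrated in the derivation of the explicit Jacobian formula; the Jensen/periodicity step and the discriminant-sign observation are both routine once that formula is in hand. An alternative, more self-contained route would be to differentiate the integral representations of $T$ and $M$ directly with respect to $a$ and $E$, regularize carefully at $u_{\pm}$, and simplify using the identity $\int_{0}^{T} G'(u)\,dx = 0$; this should reproduce the same sign without invoking the full Abelian-integral machinery of [11].
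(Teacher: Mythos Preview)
Your proposal is correct and follows essentially the same route as the paper: invoke the closed-form identity $\{T,M\}_{a,E}=-T^{3}G'(M/T)/\bigl(12\,\mathrm{disc}(E-G)\bigr)$ from~[11], use strict convexity of $G'$ together with Jensen's inequality and the mean-zero relation $\int_0^T G'(u)\,dx=0$ to force $G'(M/T)<0$, and then apply Theorem~\ref{bbmzkthm}. You have in fact supplied more detail than the paper does (the discriminant-sign argument and the derivation of the mean-zero identity from the once-integrated profile equation $c\,u_{xx}+G'(u)=0$), but the underlying argument is identical.
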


Moreover, in [18] the Jacobians $\{T,M\}_{a,E}$ and $\{T,M,P\}_{a,E,c}$ were analyzed in the case of
power-law nonlinearities $f(u)=u^{p+1}/(p+1)$ in neighborhoods of the homoclinic orbits.  In particular, it was shown
that for all $p\neq 4$, the quantities $\{T,M,P\}_{a,E,c}$ and $\{T,M\}_{a,E,c}$ are both positive 
for all dnoidal type solutions of sufficiently long wavelength and $|a|$ sufficiently small.  This prove the following corollary
of Theorem \ref{bbmzkthm}.

\begin{corr}
In the case of power-law nonlinearities, the dnoidal-like periodic traveling wave solutions
of \eqref{gbbm} of sufficiently long wavelength and $|a|$ sufficiently small are spectrally
unstable to long wavelength transverse perturbations in the gBBM-ZK equation \eqref{gbbmzk}.
\end{corr}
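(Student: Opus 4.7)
The plan is to verify the two hypotheses of Theorem \ref{bbmzkthm} for dnoidal-like orbits of sufficiently long wavelength with $|a|$ small. The first hypothesis, one-dimensional spectral stability of the underlying periodic wave, is controlled by the sign of $\{T,M,P\}_{a,E,c}$ through the natural gBBM analog of Lemma \ref{perkdv:lem}: indeed, Lemma \ref{bbmzklem} (restricted to $k=0$) gives the same leading behavior $D(\mu,0,1)=-\{T,M,P\}_{a,E,c}\mu^3+\mathcal{O}(|\mu|^4)$ as in the gKdV case, so positivity of this Jacobian is what is needed to rule out low-frequency one-dimensional real eigenvalues. Thus the whole corollary reduces to establishing simultaneously
\[
\{T,M,P\}_{a,E,c}>0 \qquad\text{and}\qquad \{T,M\}_{a,E}>0
\]
in the asymptotic regime under consideration.

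To obtain these two sign conditions I would appeal directly to the asymptotic computations carried out in [18] near a homoclinic orbit. The strategy there parallels the gKdV derivation leading to \eqref{Jacobian-Limit}: one rescales the traveling wave ODE \eqref{travelode2} so that the finite-period orbits converge to a limiting solitary wave profile, and then extracts the leading behavior of $T_a$, $T_E$, $M_a$, $M_E$ and the gradient of $P$ in terms of moments of that limiting profile. Evaluating each Jacobian at leading order produces a factorization whose sign is controlled by a single function of the nonlinearity exponent $p$, and the calculation in [18] records that both Jacobians turn out to be strictly positive for every $p \neq 4$ once the period is sufficiently long and $|a|$ is sufficiently small. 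The exclusion of $p=4$ is the analogue of the $L^2$-critical exponent familiar from the gKdV theory and simply reflects the vanishing of the leading coefficient in the expansion there.

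With both sign conditions in hand, Theorem \ref{bbmzkthm} applies verbatim and yields spectral instability of $u$ to long wavelength transverse perturbations in \eqref{gbbmzk}, completing the corollary. The substantive work is the asymptotic evaluation of the Jacobians near the homoclinic limit, and this is the expected main obstacle in principle; however that labor has already been discharged in [18], so the role of the present corollary is simply to couple those asymptotic sign determinations with the instability criterion of Theorem \ref{bbmzkthm}. Once that coupling is made explicit, the conclusion is immediate and no further calculation is required.
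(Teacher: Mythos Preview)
Your proposal is correct and follows the paper's approach essentially verbatim: the paper cites [18] for the positivity of both $\{T,M,P\}_{a,E,c}$ and $\{T,M\}_{a,E}$ for dnoidal-like waves of sufficiently long period with $|a|$ small (for $p\neq 4$), and then invokes Theorem \ref{bbmzkthm} directly. Your added remark that positivity of $\{T,M,P\}_{a,E,c}$ is what the bifurcation mechanism actually uses (rather than full one-dimensional spectral stability) is accurate and slightly sharpens the logic, but the substance is identical.
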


\section{Discussion}

In this paper, we extended the methods of [10] to study the transverse instabilities of periodic traveling
wave solutions of the gKdV equation within a cannonical two-dimensional model: the
gZK equation. The results are sufficient conditions for the instability to
long wavelength transverse perturbations..  In particular, it follows from the results
in this paper that the cnoidal solutions of KdV are spectrally unstable to such perturbations
in both the ZK equation, and the dnoidal solutions of the focusing mKdV are unstable to such perturbations
in the focusing mZK equation.  
We then extended the results in [18] by outlining the corresponding transverse
instability theory for the gBBM equation within the gBBM-ZK equation yielding analogous results.

An obvious question arises as to the meaning of the Jacobian $\{T,M\}_{a,E}$ in each of these cases.  In this paper, we have seen
that it provides sufficient information for the transverse instability of periodic traveling wave solutions
of the gKdV and gBBM in several higher dimensional extensions.  In [11] and [17], this Jacobian was also shown to
arise naturally in the orbital stability of such solutions of the gKdV to perturbations with the same periodic structure as
the underlying periodic wave, and in [18] the analogous theory was outlined for the gBBM equation.
In particular, in [17] it was found that the quantity $\{T,M\}_{a,E}$ encodes information about the possible difference in the number
of negative periodic eigenvalues of the operators $\mathcal{L}[u]$ and $\mathcal{L}[u]\big{|}_{H_1}$, where
$H_1$ is the space of mean zero periodic functions: notice that by the form of the gKdV all non-trivial evolution of initial data occur within $H_1$.
It would be interesting to understand the exact physical information the Jacobian
$\{T,M\}_{a,E}$ encodes about a periodic traveling wave solution of gKdV and the gBBM.  The key to this
may lie in performing a formal Whitham theory calculation in the context of slow modulations of
a periodic traveling wave solution of the gKdV and comparing the formal result to the rigorous Whitham theory type calculatios
of this paper and 
[10] and [18].  We have not yet performed this calculation.

It seems clear that the methods employed in this paper can be applied in a straightforward way to studying
the transverse instabilities in other nonlinear PDE where the corresponding traveling wave ODE
is {\em close enough} to being completely integrable, in the sense that one can use the integrability
to construct all elements of the null-space of the linearized operator whether by Noether's theorem
or by variation of parameters, and where the corresponding
linearization yields a spectral problem for a linearized operator.  
It is not clear however if they can be applied to equations such as the gKP equation
\[
\left(u_t-u_{xxx}-f(u)_x\right)+\sigma u_{yy}=0
\]
which arises as a weakly two-dimensional variation of the gKdV equation.  In particular, the corresponding
spectral problem arising from linearizing about a periodic traveling wave of the gKdV takes the form
\[
\left(\partial_x^2\mathcal{L}[u]+\sigma k^2\right)v=\mu\partial_x v
\]
and hence can not be realized as a spectral problem for any particular linear operator since $\partial_x$
is not invertible on $L^2_{\rm per}([0,T])$.  It would be very interesting to extend the Evans function techniques
in this paper to consider equations of this form.


\end{document}